\setlist[enumerate, 1]{1\textsuperscript{o}}
\newtheorem{theorem}{Theorem}[section]
\newtheorem{lemma}[theorem]{Lemma}
\newtheorem{proposition}[theorem]{Proposition}
\newtheorem{conjecture}[theorem]{Conjecture}
\theoremstyle{definition} 
\newtheorem{definition}[theorem]{Definition}
\newtheorem{definition-lemma}[theorem]{Definition-Lemma}
\newtheorem{example}[theorem]{Example}
\theoremstyle{remark}
\newtheorem{remark}[theorem]{Remark}
\newtheorem{notation}[theorem]{Notation}
\numberwithin{equation}{section}
\newcommand{\C}{\mathbb{C}}
\newcommand{\R}{\mathbb{R}}
\newcommand{\Z}{\mathbb{Z}}
\newcommand{\Q}{\mathbb{Q}}
\def\P{\mathbb{P}}
\newcommand{\A}{\mathbb{A}}
\DeclareMathOperator{\ord}{ord}
\DeclareMathOperator{\Pic}{Pic}
\def\Pic{\operatorname{Pic}}
\def\Spec{\operatorname{Spec}}
\def\Supp{\operatorname{Supp}}
\def\codim{\operatorname{codim}}
\newcommand{\floor}[1]{\left\lfloor #1 \right\rfloor}
\newcommand{\ceil}[1]{\left\lceil #1 \right \rceil}
\let\oldframe\frame
\renewcommand\frame[1][allowframebreaks]{\oldframe[#1]}
\title[Asymptotically flat divisors and strongly $F$-regular type varieties]{Asymptotically flat divisors and strongly $F$-regular type varieties}
\date{\today}
\subjclass[2010]{14E05, 14E30, 14J45, 14G17}
\keywords{}
\begin{document}

\author[D.~Kim]{Donghyeon Kim}
\address[Donghyeon Kim]{Department of Mathematics, Yonsei University, 50 Yonsei-ro, Seodaemun-gu, Seoul 03722, Republic of Korea}
\email{narimial0@gmail.com}

\begin{abstract}
In this paper, we define the notion of asymptotically flat divisor on a normal variety over $\C$, and prove that if $X$ is a strongly $F$-regular type variety and $K_X$ is asymptotically flat, then $X$ is of klt type.
\end{abstract}

\maketitle
\allowdisplaybreaks

\section{Introduction}

\begin{center}
\textit{In this paper, $(X,\Delta)$ is a \emph{couple} if $X$ is a normal variety and $\Delta$ is an effective $\Q$-Weil divisor. A couple $(X,\Delta)$ is a \emph{pair} if $K_X+\Delta$ is $\Q$-Cartier.}
\end{center}

\smallskip

The concept of strong $F$-regularity was first presented in \cite{HH90}, highlighting its significant relationship with singularities in the minimal model program, as further explored in \cite{HW02} and \cite{Tak04}. For instance, we observe the following:

\begin{theorem}[{cf. \cite[Theorem 3.3]{HW02} and \cite[Theorem 3.2]{Tak04}}] \label{HaraWatanabe}
Let $(X,\Delta)$ be an affine pair over $\C$. Then $(X,\Delta)$ is of strongly $F$-regular type (for the definition, see Definition \ref{str}) if and only if $(X,\Delta)$ is klt.
\end{theorem}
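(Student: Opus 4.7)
The plan is to reduce the theorem to the comparison between the multiplier ideal (in characteristic $0$) and the test ideal (in characteristic $p$), which is the standard bridge between birational and $F$-singularities.

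First I would spread out the data. Choose a finitely generated $\Z$-subalgebra $A \subset \C$ and a model $(X_A, \Delta_A)$ over $\Spec A$ such that $(X,\Delta)$ is the base change to $\C$, and such that $K_{X_A} + \Delta_A$ is $\Q$-Cartier. For each closed point $\mu \in \Spec A$, write $(X_\mu, \Delta_\mu)$ for the fiber; by definition, $(X,\Delta)$ is of strongly $F$-regular type if there is a dense open set of closed points $\mu$ such that $(X_\mu, \Delta_\mu)$ is strongly $F$-regular.

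Next I would invoke the two standard reformulations of the singularity classes in terms of ideals. On the one hand, $(X,\Delta)$ is klt if and only if the multiplier ideal $\mathcal J(X,\Delta) = \O_X$ (since a log resolution computes both). On the other hand, by the test-ideal criterion of strong $F$-regularity, $(X_\mu, \Delta_\mu)$ is strongly $F$-regular if and only if the test ideal $\tau(X_\mu, \Delta_\mu) = \O_{X_\mu}$. So the theorem reduces to showing that, for a dense set of closed points $\mu$, the triviality of $\tau(X_\mu, \Delta_\mu)$ and the triviality of $\mathcal J(X,\Delta)$ are equivalent.

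The key input, which I would cite rather than reprove, is the theorem of Smith, Hara, and Hara--Yoshida/Takagi: for all closed points $\mu$ of sufficiently large residue characteristic, one has
\begin{equation*}
\tau(X_\mu, \Delta_\mu) \;=\; \mathcal J(X,\Delta)_\mu,
\end{equation*}
where the right-hand side denotes the reduction mod $\mu$ of a suitable $A$-model of the multiplier ideal. Given this, if $(X,\Delta)$ is klt, then $\mathcal J(X,\Delta) = \O_X$, so for all closed points $\mu$ of large characteristic we get $\tau(X_\mu, \Delta_\mu) = \O_{X_\mu}$, proving strongly $F$-regular type. Conversely, if $(X,\Delta)$ is of strongly $F$-regular type, then a dense set of $\mu$ of large characteristic satisfies $\tau(X_\mu, \Delta_\mu) = \O_{X_\mu}$, so $\mathcal J(X,\Delta)_\mu = \O_{X_\mu}$ for those $\mu$; by a standard spreading-out/generic-flatness argument this forces $\mathcal J(X,\Delta) = \O_X$, giving klt.

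The main obstacle is of course the comparison $\tau = \mathcal J \bmod p$, which requires choosing a log resolution $\pi: Y \to X$, spreading it out over $A$ along with the relevant line bundles, and using the compatibility of the Grothendieck trace with Frobenius to connect $F$-regularity of $X_\mu$ to the vanishing (for $p \gg 0$) of higher direct images of $\pi_\mu$. Since this is precisely \cite{HW02, Tak04}, I would not reproduce the argument but simply quote it; the only calculation I would include is verifying the spreading-out hypotheses (flatness of the model, Cohen--Macaulayness of a log resolution for almost all $\mu$, and the vanishing of the relevant $R^i \pi_{\mu,*}$ for $p \gg 0$).
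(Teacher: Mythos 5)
Your proposal is correct and coincides with what the paper does: the paper gives no proof of this theorem at all, but simply cites \cite[Theorem 3.3]{HW02} and \cite[Theorem 3.2]{Tak04}, and your argument (spreading out, the characterizations of klt and strong $F$-regularity by triviality of the multiplier ideal $\mathcal{J}(X,\Delta)$ and the test ideal $\tau(X_\mu,\Delta_\mu)$, and the comparison $\tau(X_\mu,\Delta_\mu)=\mathcal{J}(X,\Delta)_\mu$ for $p\gg 0$) is precisely the content of those references. The only point worth double-checking is that Definition \ref{str} asks for strong $F$-regularity at a \emph{general} closed point, so in the converse direction you should intersect your dense set of good primes with the dense open locus where the test-ideal/multiplier-ideal comparison holds, which is harmless.
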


The concept of strong $F$-regularity can be defined in a couple setting, and hence, considering Theorem \ref{HaraWatanabe}, it is reasonable to suggest that the following notion suitably represents the notion of Kawamata log terminal singularities in a couple setting.

\begin{definition}
Let $(X,\Delta)$ be a couple. We say $(X,\Delta)$ is \emph{of klt type}\footnote{The term ``of klt type" is sometimes called ``potentially klt" in some texts, such as in \cite{Xu25}.} if there exists an effective $\Q$-Weil divisor $\Delta'$ on $X$ with the property that $(X,\Delta+\Delta')$ is klt.
\end{definition}

Note that if $(X,\Delta)$ is a pair, then $(X,\Delta)$ is klt if and only if $(X,\Delta)$ is of klt type.

\smallskip

One may consider expanding Theorem \ref{HaraWatanabe} to a couple setting. Hara and Watanabe conjectured that any strongly $F$-regular type couple is of klt type:

\begin{conjecture}[{cf. \cite[Problem 5.3.1]{HW02} and \cite[Question 7.1]{SS10}}] \label{conjecture}
Let $(X,\Delta)$ be a couple. Then $(X,\Delta)$ is of strongly $F$-regular type if and only if $(X,\Delta)$ is of klt type.
\end{conjecture}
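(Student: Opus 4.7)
The plan is to prove the two directions of the biconditional separately. The ``if'' direction, that every couple of klt type is of strongly $F$-regular type, should be comparatively direct: if $(X,\Delta)$ is of klt type, fix $\Delta'\geq 0$ such that $(X,\Delta+\Delta')$ is klt. By Theorem \ref{HaraWatanabe} applied locally on an affine cover, $(X,\Delta+\Delta')$ is of strongly $F$-regular type. Since strong $F$-regularity only improves when the boundary is made smaller — removing the effective divisor $\Delta'$ cannot hurt the property — $(X,\Delta)$ is of strongly $F$-regular type as well. The details here reduce to checking that the natural definition in the couple setting respects subtraction of effective boundary, which is a formal consequence of how test ideals transform.

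For the harder direction, strongly $F$-regular type implies klt type, I would try to reduce to the setting already covered by Theorem \ref{HaraWatanabe} by producing an effective $\Q$-Weil divisor $\Delta'$ such that $K_X+\Delta+\Delta'$ is $\Q$-Cartier and $(X,\Delta+\Delta')$ remains of strongly $F$-regular type. Once such a $\Delta'$ is constructed, $(X,\Delta+\Delta')$ is an actual pair, Theorem \ref{HaraWatanabe} gives that it is klt, and by definition this witnesses $(X,\Delta)$ as of klt type.

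Constructing $\Delta'$ is where the notion of asymptotically flat divisor advertised in the abstract should play the central role. My strategy is to use the strongly $F$-regular type hypothesis to show that $K_X+\Delta$ behaves, in the author's quantitative sense, as an asymptotically flat limit of $\Q$-Cartier classes. One then hopes to add a small general effective $\Delta'$ that simultaneously (i) $\Q$-Cartierizes $K_X+\Delta$, (ii) stays small enough in an appropriate norm that strongly $F$-regular type is preserved after reduction mod $p$, and (iii) absorbs the obstruction to $\Q$-Cartierness coming from the class group. Natural inputs are boundedness of denominators in the local class group at strongly $F$-regular singularities (à la Carvajal-Rojas–Schwede–Tucker, transferred to characteristic zero via reduction mod $p$), and a comparison between asymptotic multiplier ideals on a log resolution and the test ideals computing strong $F$-regularity.

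The step I expect to be the main obstacle is (i): a priori there is no reason that $K_X+\Delta$ becomes $\Q$-Cartier after adding a small effective divisor, because the local class group $\Cl(X)/\Pic(X)$ can be badly behaved. What the hypothesis buys is finiteness of the divisor class group at each point (a consequence of strong $F$-regularity in characteristic $p$), and this finiteness is precisely what should allow one to perturb $K_X+\Delta$ to a $\Q$-Cartier class by an arbitrarily small effective correction. The delicate point is showing that such a correction can be chosen to remain inside the open region of boundary divisors for which strongly $F$-regular type persists; making this compatibility quantitative is exactly what the asymptotic flatness framework is designed to handle, and the success of the whole plan hinges on whether the asymptotic flatness of $K_X+\Delta$ automatically holds for every strongly $F$-regular type couple, rather than being an extra hypothesis as in the main theorem of the paper.
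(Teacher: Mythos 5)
The statement you are trying to prove is stated in the paper as a \emph{conjecture} (the Hara--Watanabe conjecture), and the paper does not prove it: it only establishes the special case in which $K_X+\Delta$ is assumed asymptotically flat (Theorem \ref{5000}), together with sufficient conditions for asymptotic flatness (Proposition \ref{GG}). So there is no ``paper proof'' to match your argument against, and your proposal must be judged as an attempt at the open statement itself.

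Your ``if'' direction is fine in outline: given $\Delta'$ with $(X,\Delta+\Delta')$ klt, Theorem \ref{HaraWatanabe} gives strongly $F$-regular type for the pair, and decreasing the boundary preserves strong $F$-regularity, so $(X,\Delta)$ is of strongly $F$-regular type. The genuine gap is in the converse. Your step (i) — perturbing $K_X+\Delta$ by a small effective $\Delta'$ to a $\Q$-Cartier class — cannot be justified by ``finiteness of the divisor class group at strongly $F$-regular points.'' What Carvajal-Rojas--Schwede--Tucker-type results give is finiteness of the \emph{torsion} subgroup of the local class group (equivalently, of the local \'etale fundamental group); the full local class group of a strongly $F$-regular singularity is typically infinite, and the obstruction class of $K_X+\Delta$ in $\Cl/\Pic$ can be non-torsion. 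The paper's own Example \ref{exex} is exactly such a cone: it is of strongly $F$-regular type, yet $K_S$ is not $\Q$-Cartier and represents a non-torsion class. So there is no ``arbitrarily small'' correction in any norm; one must find an effective $\Delta'$ in a \emph{specific} divisor class, and then verify that $(X,\Delta+\Delta')$ is still of strongly $F$-regular type. Producing such a $\Delta'$ is essentially equivalent to the finite generation of $\bigoplus_m \mathcal{O}_X(\floor{-m(K_X+\Delta)})$, which is the hypothesis of \cite{CEMS18} and is not known to follow from strong $F$-regularity of type. Your fallback — that asymptotic flatness of $K_X+\Delta$ holds automatically for every strongly $F$-regular type couple — is precisely the point you yourself flag as unproven, and it is where the argument stops being a proof. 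As written, the proposal reduces the conjecture to an equally open statement rather than resolving it.
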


Note that in \cite{CEMS18}, the authors proved Conjecture \ref{conjecture} under the assumption that 
$$\bigoplus_{m\ge 0}\mathcal{O}_X(\floor{-m(K_X+\Delta)})$$
is a finitely generated $\mathcal{O}_X$-algebra.

\smallskip

This paper introduces the concept of \emph{asymptotic flatness} (refer to Definition \ref{numerically} (a) for details). We demonstrate that in many cases, $\Q$-Weil divisors satisfy the asymptotic flatness condition.

\begin{proposition} \label{GG}
Let $X$ be a normal variety over $\C$, and let $D$ be a $\Q$-Weil divisor on $X$ such that either
\begin{itemize}
    \item[\emph{(a)}] $\bigoplus_{m\ge 0}\mathcal{O}_X(\floor{-mD})$ is a finitely generated $\mathcal{O}_X$-algebra,
    \item[\emph{(b)}] $D$ is numerically $\Q$-Cartier (cf. Definition \ref{numerically}), or
    \item[\emph{(c)}] $X$ has rational singularities and infinitely many reductions mod $p$ of $mD$ is $(S_3)$.
\end{itemize}
Then $D$ is asymptotically flat.
\end{proposition}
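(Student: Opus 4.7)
The plan is to treat the three hypotheses in Proposition \ref{GG} separately, since the underlying techniques differ substantially and each case warrants its own argument. A common thread is that in each case I must extract, from rather different input data (algebraic, numerical, or characteristic-$p$ depth-theoretic), the specific asymptotic behavior of the reflexive sheaves $\mathcal{O}_X(\floor{-mD})$ that Definition \ref{numerically}(a) requires.

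For case (a), when $R := \bigoplus_{m\ge 0}\mathcal{O}_X(\floor{-mD})$ is a finitely generated $\mathcal{O}_X$-algebra, I would pass to a sufficiently divisible Veronese subalgebra $R^{(N)}$ so that $ND$ is Cartier on the smooth locus and $R^{(N)}$ is generated in degree one. The natural multiplication maps
$$\mathcal{O}_X(\floor{-mD})\otimes \mathcal{O}_X(\floor{-nD})\longrightarrow \mathcal{O}_X(\floor{-(m+n)D})$$
then stabilize to isomorphisms along multiples of $N$, which should directly yield the asymptotic flatness condition by comparing with the $N$-truncated behavior. This case I expect to be essentially formal.

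For case (b), since numerical $\Q$-Cartierness is part (b) of the very definition whose part (a) defines asymptotic flatness, I expect the implication to be mostly an unwinding of definitions, with numerical $\Q$-Cartierness realizing the required flatness condition at the numerical level in a limiting sense. The real work here will be to verify carefully that no discrepancy arises between the numerical and the asymptotic formulations.

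Case (c) will be the main obstacle. The $(S_3)$ assumption on reductions mod $p$ of $\mathcal{O}_X(\floor{-mD})$ is precisely the depth hypothesis needed to kill the obstructing local cohomology $H^2_{\mathfrak{m}}$, and combined with rational singularities (which, after spreading out, yields $F$-rationality or a close approximation of infinitely many reductions) this should force the Frobenius-iterated comparison sequences relevant to asymptotic flatness to behave as required. I expect to proceed by reducing to a characteristic-$p$ statement, exploiting $F$-rationality and $(S_3)$ to trivialize the obstruction in positive characteristic, and then lifting back to $\C$ by standard spreading-out. The delicate point is ensuring that the asymptotic regime $m\to\infty$ is compatible with the choice of primes $p$ for which $(S_3)$ holds, so that the limit statement over $\C$ genuinely survives the reduction/lift procedure.
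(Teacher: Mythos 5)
Your proposal misreads what asymptotic flatness asks for, and this undermines all three cases. Definition \ref{numerically}(a) is not a statement about the asymptotic behaviour of the sheaves $\mathcal{O}_X(\floor{-mD})$ over $\C$; it is a reduction-mod-$p$ comparison: for each quasi-monomial valuation one must bound $\nu_{(X',E),\eta,\alpha}(D)$ above by $\nu_{(X'_{\overline{s}},E_{\overline{s}}),\eta_{\overline{s}},\alpha}(D_{\overline{s}}+\Delta')$ for \emph{every} effective $\Delta'$ making $D_{\overline{s}}+\Delta'$ $\Q$-Cartier. Your case (a) argument (Veronese subalgebras, stabilizing multiplication maps) lives entirely in characteristic zero and never compares the two sides; the actual content is \cite[Lemma 6.2]{CEMS18}, which says finite generation forces $\bigoplus_m\mathcal{O}_{X_R}(-mnD_R)\big|_{X_{\overline{s}}}\to\bigoplus_m\mathcal{O}_{X_{\overline{s}}}(-mnD_{\overline{s}})$ to be an isomorphism for general $s$, so that a local generator at the center of the valuation lifts and $\nu^{\natural}$ can only increase under reduction. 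Your case (b) is wrong as stated: items (a), (b), (c) of Definition \ref{numerically} are three independent notions, and the implication from numerical $\Q$-Cartierness is nowhere near a definitional unwinding. It requires applying a negativity lemma (Lemma \ref{듸듸<3<3}) to the exceptional divisor $-(f_{\overline{s}}\circ f'_{\overline{s}})^*(D_{\overline{s}}+\Delta')+D^{\mathrm{num}}_{\overline{s}}+(f_{\overline{s}}\circ f'_{\overline{s}})^{-1}_*\Delta'$, and that lemma in turn rests on constructing families of curves, movable in codimension one, whose reductions mod $p$ remain movable --- one of the most delicate points of the paper, and one your plan does not anticipate at all.

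For case (c) you are closer in spirit ($(S_3)$ is indeed a depth condition controlling local cohomology), but the mechanism you propose --- $F$-rationality and Frobenius-iterated comparison sequences --- is not what is needed, and I do not see how it would produce the required valuation inequality. What the hypothesis actually buys, combined with the fact that rational singularities make $D$ $\Q$-Cartier outside a closed subset of codimension $\ge 3$, is that each $\mathcal{O}_{X_R}(mD_R)$ is flat over $R$ and commutes with base change to $\overline{s}$ (via \cite[Theorems 9.17 and 10.73]{Kol23}); once base change holds one again lifts a local generator and concludes as in case (a). Finally, two reductions are needed in every case and appear nowhere in your plan: why it suffices to compare with $D_{\overline{s}}$ rather than $D_{\overline{s}}+\Delta'$ in cases (a) and (c) (effectivity of $\Delta'$ plus $\Q$-Cartierness of the sum give $\nu(D_{\overline{s}})\le\nu(D_{\overline{s}}+\Delta')$), and why it suffices to check rational weights $\alpha$ (semicontinuity, Lemma \ref{youareeasy'}).
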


Here, the phrase \emph{infinitely many reductions mod $p$ of $mD$ are $(S_3)$} means that there exists a model $(X_R, D_R)$ of $(X, D)$ over a finitely generated $\Z$-algebra $R$, along with a Zariski-dense subset $S' \subseteq \Spec R$, such that for any positive integer $m$ and any $s \in S'$, the sheaf $\mathcal{O}_{X_{\overline{s}}}(mD_{\overline{s}})$ satisfies Serre's condition $(S_3)$ (refer to Example \ref{exex} for an instance of such a Weil divisor). It is noteworthy that when $X$ is of strongly $F$-regular type, then $X$ has rational singularities by \cite{Smi97}.

\smallskip

In particular, if $X$ is of klt type, then any Weil divisor is asymptotically flat by (a) in Proposition \ref{GG} and \cite{BCHM10}. 

\smallskip

The main theorem of the paper provides a partial affirmative answer to Conjecture \ref{conjecture}.

\begin{theorem} \label{5000}
Let $(X,\Delta)$ be an affine couple over $\C$ with $K_X+\Delta$ asymptotically flat. Then $(X,\Delta)$ is of strongly $F$-regular type if and only if $(X,\Delta)$ is of klt type.
\end{theorem}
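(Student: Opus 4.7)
Plan. The proof splits into the two directions of the equivalence, with only the reverse direction using the asymptotic flatness hypothesis.

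For the direction \emph{klt type implies strongly $F$-regular type}, choose an effective $\Q$-Weil divisor $\Delta' \ge 0$ with $(X, \Delta + \Delta')$ klt. Then $K_X + \Delta + \Delta'$ is $\Q$-Cartier, so Theorem \ref{HaraWatanabe} gives that $(X, \Delta + \Delta')$ is of strongly $F$-regular type. Strong $F$-regularity in the couple setting is monotone in the effective boundary, since enlarging the boundary only demands more of the Frobenius splittings. Hence the subcouple $(X, \Delta)$ is itself of strongly $F$-regular type. Asymptotic flatness is not used in this half.

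For the converse, assume $(X, \Delta)$ is of strongly $F$-regular type and $K_X + \Delta$ is asymptotically flat. The plan is to construct an effective $\Q$-Weil divisor $\Delta'$ such that (i) $K_X + \Delta + \Delta'$ is $\Q$-Cartier, and (ii) $(X, \Delta + \Delta')$ is still of strongly $F$-regular type. Given such a $\Delta'$, Theorem \ref{HaraWatanabe} applied to the pair $(X, \Delta + \Delta')$ yields that it is klt, which is by definition klt-type for $(X, \Delta)$. Since $X$ is affine, for large divisible $m$ the sheaf $\mc{O}_X(\lfloor -m(K_X + \Delta) \rfloor)$ has many global sections, and for a judicious choice $s_m$ of such a section the divisor $\Delta'_m := \frac{1}{m} \mathrm{div}(s_m)$ is effective and makes $m(K_X + \Delta + \Delta'_m)$ Cartier, so (i) will be automatic. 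This mirrors the strategy of \cite{CEMS18}, where finite generation of $\bigoplus_m \mc{O}_X(\lfloor -m(K_X + \Delta)\rfloor)$ provides a small $\Q$-factorial modification turning $K_X + \Delta$ into a $\Q$-Cartier divisor; asymptotic flatness is meant to be the correct weaker hypothesis for realizing the same effect by a direct section-theoretic construction.

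The main obstacle is condition (ii), i.e.\ the preservation of strong $F$-regularity under the perturbation by $\Delta'_m$. The strong $F$-regularity of $(X, \Delta)$ gives splittings of Frobenius twisted by pieces of the divisorial algebra of $K_X + \Delta$ on a Zariski dense subset of closed points of an integral model $(X_R, \Delta_R)$. Asymptotic flatness is the hypothesis that controls how the reflexive sheaves $\mc{O}_{X_{\overline{s}}}(\lfloor -m(K_X + \Delta)_{\overline{s}} \rfloor)$ behave in the mod $p$ specializations, and the plan is to exploit it to select $s_m$ on the same dense locus so that a splitting witnessing strong $F$-regularity of $(X_{\overline{s}}, \Delta_{\overline{s}})$ factors through multiplication by $s_m$ and therefore also witnesses strong $F$-regularity of the $\Q$-Cartier perturbation $(X_{\overline{s}}, \Delta_{\overline{s}} + \Delta'_{m,\overline{s}})$. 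Carrying out this transfer of splitting data from the couple to the approximating pair, and then spreading out to conclude in characteristic $0$, is the technical core of the argument.
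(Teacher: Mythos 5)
The easy direction (klt type $\Rightarrow$ strongly $F$-regular type) is fine and matches the paper, which simply invokes Theorem \ref{HaraWatanabe} together with the standard monotonicity of Frobenius splittings in the boundary.

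For the hard direction, however, your plan has a genuine gap, and it is precisely the gap that the paper is designed to circumvent. You propose to choose a general $D_m\sim -m(K_X+\Delta)$ over $\C$, set $\Delta'_m:=\tfrac1m D_m$, and argue that $(X,\Delta+\Delta'_m)$ remains of strongly $F$-regular type; you defer this to ``the technical core.'' But this step fails for a structural reason. Strong $F$-regularity of $(X_{\overline s},\Delta_{\overline s})$ gives, for the fixed effective divisor $D_{m,\overline s}$, some $e=e(m,s)$ such that $\mathcal{O}_{X_{\overline s}}\to F^e_*\mathcal{O}_{X_{\overline s}}(\lceil(p^e-1)\Delta_{\overline s}\rceil+D_{m,\overline s})$ splits, which only yields strong $F$-regularity of $(X_{\overline s},\Delta_{\overline s}+\tfrac{1}{p^e-1}D_{m,\overline s})$; to conclude for the coefficient $\tfrac1m$ you need $m\gtrsim p^e-1$, while $e$ itself depends on $m$ (and on $s$). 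Breaking this circularity is exactly what finite generation of $\bigoplus_m\mathcal{O}_X(\lfloor-m(K_X+\Delta)\rfloor)$ achieves in \cite{CEMS18}, and the introduction of the paper explains that without it the auxiliary boundary produced in characteristic $p$ is characteristic-dependent and cannot be lifted. Your proposal also never engages with the actual definition of asymptotic flatness (Definition \ref{numerically}(a)), which is a valuation-theoretic inequality comparing $\nu_{(X',E),\eta,\alpha}(K_X+\Delta)$ with $\nu_{(X'_{\overline s},E_{\overline s}),\eta_{\overline s},\alpha}(K_{X_{\overline s}}+\Delta_{\overline s}+\Delta')$ for $\Q$-Cartier perturbations mod $p$ --- not a statement about specialization of the sheaves $\mathcal{O}_{X_{\overline s}}(\lfloor-m(K_X+\Delta)_{\overline s}\rfloor)$; the latter is only one of several sufficient criteria in Proposition \ref{GG}. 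The paper's actual route avoids constructing any boundary over $\C$: it verifies de Fernex--Hacon's boundary-free criterion $A_{X,\Delta,m}(E)>0$ (Theorem \ref{dFH09}) by passing to a small $\Q$-factorialization (Lemma \ref{strong1}), encoding klt type as $\lct(X',f^{-1}_*\Delta,\mathfrak{a}^{X'}_\bullet)>1$ for a graded sequence of ideals (Lemma \ref{BLX}), reducing to a single quasi-monomial valuation via the Jonsson--Musta\c{t}\u{a} theorem, and only then using asymptotic flatness together with a Diophantine approximation at that one valuation. To salvage your approach you would need a new idea replacing finite generation; as written, step (ii) is the theorem, not a technicality.
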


Note that our result partially extends \cite[Theorem D]{CEMS18}. Let us explain the argument in proving Theorem \ref{5000}. As noted in \cite{SS10}, the challenge in establishing Theorem \ref{5000} lies in the fact that, even if we show for a model $(X_R,\Delta_R)$ over a finitely generated $\Z$-algebra $R$, there exists an effective $\Q$-Weil divisor $\Delta'_s$ on $X_{\overline{s}}$ such that $(X_{\overline{s}},\Delta_{\overline{s}}+\Delta'_s)$ is klt, the construction of $\Delta'_s$ is largely characteristic-dependent, and thus we cannot lift $\Delta'_s$ to $X$. To tackle this issue, we will employ de Fernex-Hacon's characterization of klt type varieties as presented in \cite{dFH09}, providing a boundary-free notion of klt type variety.

\smallskip

However, if we try to prove Theorem \ref{5000}, then we encounter the situation that we must consider all resolutions of $(X,\Delta,\mathcal{O}_X(-m(K_X+\Delta)))$ for all $m$, and there is no guarantee that for some $p$, we can take reduction mod $p$ to all such resolutions.

\smallskip

Observing that $\left\{\mathcal{O}_X(-m(K_X+\Delta))\right\}_{m(K_X+\Delta)\text{ is Weil}}$ forms a graded sequence of fractional ideals on $X$, we might consider the Jonsson-Mustață conjecture (see $\mathfrak{q}=0$ setting of \cite[Conjecture 7.4]{JM12} and \cite[Theorem 1.1]{Xu20}). This theorem suggests that a log smooth model defining a quasi-monomial valuation could potentially act as a log resolution for all $(X,\Delta,\mathcal{O}_X(-m(K_X+\Delta)))$. Note that this idea starts in \cite{BLX22} to prove $K$-semistability is an open condition. Though \cite{BLX22} does not use the Jonsson-Mustață conjecture explicitly, the proof is almost the same as that of \cite[Theorem 1.1]{Xu20}, and we can view \cite[Theorem 1.1 and Corollary 1.2]{BLX22} as applications of the Jonsson-Mustață conjecture. The idea is first applied in solving problems related to positive characteristic in \cite{Kim25b}. Nonetheless, we are unable to directly employ the Jonsson-Mustață conjecture here, due to the Jonsson-Mustață conjecture that is presented in \cite{Xu20} can be used in the klt setting; we lack information about whether $(X,\Delta)$ is of klt nature.

\smallskip

Therefore, we pass $(X,\Delta)$ to a small $\Q$-factorialization (refer to Definition \ref{Q}) and then apply the Jonsson-Mustață conjecture to the small $\Q$-factorialization. We are able to establish that $A_{X,\Delta}(E)>0$ (see Definition \ref{see} for details) for each prime divisor $E$ over $X$ under the assumption that $K_X+\Delta$ is asymptotically flat, which is sufficient to confirm the existence of a small $\Q$-factorialization $X'$ of $(X,\Delta)$ and to define a graded series of ideals in $\mathcal{O}_{X'}$ that detects the klt property of $(X,\Delta)$. The remaining task is to use the Jonsson-Mustață conjecture and apply a Diophantine approximation as in \cite[Theorem 1.3]{KK23} and \cite[Proposition 4.2]{CJKL25}.

\smallskip

This paper is organized as follows: In Section \ref{22}, we collect basic definitions. In Section \ref{333}, we recall the definition of natural valuation and natural pullback from \cite{dFH09}, and prove some lemmas that will be used later. In Section \ref{4444}, we define log discrepancy for a couple $(X,\Delta)$, and prove that if $(X,\Delta)$ is a strongly $F$-regular type couple, then $A_{X,\Delta}(E)>0$ for every prime divisor $E$ over $X$ under the assumption that $K_X+\Delta$ is asymptotically flat. In Section \ref{55555}, we prove the main theorem, Theorem \ref{5000}.

\section{Preliminaries} \label{22}
In this paper, we define \emph{variety} as an integral, separated, and finite-type scheme over an algebraically closed field. For convenience, in this paper, all varieties are assumed to be quasi-projective.

\smallskip

Let us collect the notations we will use. For more notions, see \cite{KM98} and \cite{Fuj17}.

\begin{itemize}
    \item For a normal variety $X$, and an effective $\Q$-Weil divisor $\Delta$ on $X$, we say that $(X,\Delta)$ is a \emph{couple}. The terminology is imported from \cite[2.8]{Bir19}. If $(X,\Delta)$ is a couple with $K_X+\Delta$ $\Q$-Cartier, then we say $(X,\Delta)$ is a \emph{pair}.
    \item Let $X$ be a variety. A morphism $f:X'\to X$ is a \emph{resolution} if it is a proper birational morphism with $X'$ smooth. For a couple $(X,\Delta)$, a resolution $f:X'\to X$ is a \emph{log resolution} if $\mathrm{Exc}(f)\cup \Supp f^{-1}_*\Delta$ is a simple normal crossing (snc) divisor.
    \item Let $S$ be a scheme, and let $s\in S$ be a point. We denote by $k(s)$ the residue field of $\mathcal{O}_{S,s}$. We can regard $s$ as a morphism $s:\Spec k(s)\to S$. We define $\overline{s}$ by the morphism $\Spec \overline{k(s)}\to S$. For an integral scheme $X$ we denote by $k(X)$ the function field of $X$.
    \item Let $f:X\to S$ be a morphism of schemes, and let $s\in S$ be a point. Define $X|_s:=X\times_S s$ and $X|_{\overline{s}}:=X\times_S \overline{s}$.
    \item Let $(X,\Delta)$ be a pair, $f:X'\to X$ a proper birational morphism between two normal varieties, and $E$ a prime divisor on $X'$. We define the \emph{log discrepancy} by
    $$ A_{X,\Delta}(E):=\mathrm{mult}_E(K_{X'}-f^*(K_X+\Delta))+1.$$
    Note that the definition does not depend on the choice of $f$. If $A_{X,\Delta}(E)>0$ for all prime divisors $E$ over $X$, then we say that $(X,\Delta)$ is \emph{kawamata log terminal (klt)}.
    \item Consider $X$ as a normal variety, and $\mathcal{F}$ as a coherent sheaf on $X$. We define
    $$ \mathcal{F}^{\vee}:=\mathcal{H}om_{\mathcal{O}_X}(\mathcal{F},\mathcal{O}_X),$$
    and refer to $\mathcal{F}^{\vee\vee}$ as the \emph{double dual} of $\mathcal{F}$. There exists a natural map $\mathcal{F}\to \mathcal{F}^{\vee\vee}$. We say that $\mathcal{F}$ is a \emph{reflexive sheaf} if this natural map is an isomorphism.
    \item Let $f:X\to S$ be a morphism of varieties, and $D$ a Cartier divisor on $X$. We define the base ideal $\mathfrak{b}(X/S,|D|)$ as the image of the natural map
    $$ f^*f_*\mathcal{O}_X(D)\otimes \mathcal{O}_X(-D)\to \mathcal{O}_X.$$
    Let us say $D$ is \emph{movable over $S$} if there is a positive integer $m$ such that the scheme in $X$ defined by $\mathfrak{b}(X/S,|mD|)$ has codimension $\ge 2$.
    \item Let $X$ be a normal variety, and let $\mathcal{F}$ a coherent sheaf on $X$. We say that $\mathcal{F}$ is \emph{globally generated in codimension $1$} if there is an open subscheme $U\subseteq X$ with $\mathrm{codim}_X(X\setminus U)\ge 2$ such that $\mathcal{F}|_U$ is a globally generated sheaf on $U$.
\end{itemize}

\subsection{Strongly $F$-regular couple}
In this subsection, we define the notion of \emph{strongly $F$-regular couple}. Let $X$ be a variety over a field of characteristic $p>0$. Then we can define the \emph{Frobenius} $F_X:X\to X$ as follows: If $X=\Spec R$ is affine, then the Frobenius is just the Frobenius on $R$. If $X$ is general, then we can patch an affine chart of $X$ to define $F_X:X\to X$.

\begin{definition}
Let $(X,\Delta)$ be an affine couple over a characteristic $p>0$ field. We say that $(X,\Delta)$ is \emph{strongly $F$-regular} if for every effective divisor $D$ on $X$, there exists some $e>0$ such that the natural map
$$ \mathcal{O}_X\hookrightarrow F^e_{X*}\mathcal{O}_X(\ceil{(p^e-1)\Delta}+D)$$
splits.
\end{definition}

Let us recall the main theorems in \cite{SS10}.

\begin{theorem}[{cf. \cite[Theorem 4.3]{SS10}}] \label{SS10}
Let $(X,\Delta)$ be a strongly $F$-regular couple. Then there is an effective $\Q$-Weil divisor $\Delta'$ on $X$ such that $(X,\Delta+\Delta')$ is a strongly $F$-regular pair.
\end{theorem}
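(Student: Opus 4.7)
The plan is to distill the strong $F$-regularity hypothesis on the couple down to a single splitting of Frobenius, convert that splitting via Grothendieck duality into an effective Weil boundary $\Delta'$, and then check that the enlarged boundary $\Delta+\Delta'$ is simultaneously $\Q$-Cartier and strongly $F$-regular as a pair.

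First I would apply the definition of strongly $F$-regular couple with $D=0$ to obtain some $e>0$ and an $\mathcal{O}_X$-linear retraction
$$\phi\colon F^e_{X*}\mathcal{O}_X(\ceil{(p^e-1)\Delta})\twoheadrightarrow\mathcal{O}_X.$$
Grothendieck duality for the finite flat Frobenius $F^e$, applied on the Gorenstein (hence big) locus of $X$ and extended across the singular set by reflexivity, identifies the space of such $\mathcal{O}_X$-linear maps with $H^0(X,\mathcal{O}_X((1-p^e)K_X-\ceil{(p^e-1)\Delta}))$. The map $\phi$ therefore corresponds to an effective Weil divisor $D_\phi$ satisfying $D_\phi\sim(1-p^e)K_X-\ceil{(p^e-1)\Delta}$. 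Setting
$$\Delta':=\frac{\ceil{(p^e-1)\Delta}+D_\phi}{p^e-1}-\Delta,$$
I obtain an effective $\Q$-Weil divisor (because $\ceil{(p^e-1)\Delta}\ge(p^e-1)\Delta$ and $D_\phi\ge 0$); a direct rearrangement yields $(p^e-1)(K_X+\Delta+\Delta')\sim 0$, so $K_X+\Delta+\Delta'$ is $\Q$-torsion, hence $\Q$-Cartier, and $(X,\Delta+\Delta')$ is a pair.

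Finally I would verify that this pair is strongly $F$-regular: given any effective divisor $D'$ on $X$, I must exhibit $e'>0$ and a splitting of $\mathcal{O}_X\hookrightarrow F^{e'}_{X*}\mathcal{O}_X(\ceil{(p^{e'}-1)(\Delta+\Delta')}+D')$. I would choose $e'=en$, so that $(p^{en}-1)(\Delta+\Delta')$ is integral (since $(p^e-1)(\Delta+\Delta')=\ceil{(p^e-1)\Delta}+D_\phi$ is); iterating $\phi$ via the standard Frobenius trick, using the factorization $p^{en}-1=(p^e-1)(1+p^e+\cdots+p^{e(n-1)})$, produces a splitting at level $en$ for $(X,\Delta+\Delta')$ with $D'=0$. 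To accommodate a nonzero $D'$, I would apply the strong $F$-regularity of the original couple $(X,\Delta)$ to an effective divisor dominating $D'$ together with the ceiling correction $\ceil{(p^{en}-1)(\Delta+\Delta')}-\ceil{(p^{en}-1)\Delta}$; composing or twisting the resulting splitting with the iterate of $\phi$ yields the required retraction, with affineness of $X$ providing enough global sections (functions vanishing along $D'$) to make the composition a splitting rather than merely a nonzero map.

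The main obstacle is the last step. The datum $\phi$ only retracts a single Frobenius pushforward, whereas strong $F$-regularity of the pair demands retractions for an infinite family parametrized by arbitrary effective $D'$. The remedy is to use the full hypothesis on $(X,\Delta)$---not merely the one splitting $\phi$---to generate splittings for arbitrary $D'$, and to combine them with iterates of $\phi$ in order to translate statements phrased in terms of $\Delta$ into statements phrased in terms of $\Delta+\Delta'$. Keeping Frobenius levels compatible through this iteration, and verifying that each composition remains a splitting rather than a mere nonzero map, is the technical heart of the proof.
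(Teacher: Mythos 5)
The paper's own proof of this theorem is a one-line citation of \cite[Theorem 4.3]{SS10} (which even yields the stronger conclusion that $-(K_X+\Delta+\Delta')$ is ample); you are instead re-deriving that result from scratch, so the routes are genuinely different. Your first two steps are correct and are exactly how Schwede--Smith construct the boundary: the splitting $\phi$ obtained from the couple hypothesis with $D=0$ corresponds, via duality for the Frobenius extended across the singular locus by reflexivity, to an effective $D_\phi\sim(1-p^e)K_X-\ceil{(p^e-1)\Delta}$, and the resulting $\Delta'$ is effective with $(p^e-1)(K_X+\Delta+\Delta')\sim 0$, hence $\Q$-Cartier.

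The last step, however, contains a circularity that your sketch does not resolve. You propose to feed into the strong $F$-regularity of the couple $(X,\Delta)$ an effective divisor dominating $D'$ plus the ceiling correction $\ceil{(p^{en}-1)(\Delta+\Delta')}-\ceil{(p^{en}-1)\Delta}$, which is roughly $(p^{en}-1)\Delta'$. But the definition only returns a splitting at some level $e_1$ that depends on the divisor you feed in, while the correction you must absorb in order to pass from $\Delta$ to $\Delta+\Delta'$ at level $e_1$ is roughly $(p^{e_1}-1)\Delta'$, which depends on $e_1$ and grows exponentially with it; whichever of the auxiliary divisor and the Frobenius level you fix first, the other leaves a deficit. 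The standard way to break this circle is \cite[Theorem 3.9]{SS10}: strong $F$-regularity of $(X,\Delta+\Delta')$ follows from a \emph{single} splitting of $\mathcal{O}_X\to F^{e}_{X*}\mathcal{O}_X(\ceil{(p^{e}-1)(\Delta+\Delta')}+C)$ for one $e$ and one effective $C$ such that the pair is already strongly $F$-regular on $X\setminus\Supp C$. Iterating that one splitting makes the auxiliary divisor grow like $\frac{p^{ne}-1}{p^e-1}C$, i.e., at the same exponential rate as the boundary correction, which is what defeats the circularity; and since $\Delta+\Delta'=\Delta$ away from $\Supp\Delta'$ and strong $F$-regularity localizes, the condition on $X\setminus\Supp C$ is automatic for suitable $C$. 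Either supply this criterion (or an equivalent) in your final step, or do as the paper does and simply invoke \cite[Theorem 4.3]{SS10}.
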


\begin{proof}  
We can apply \cite[Theorem 4.3]{SS10} and there is an effective $\Q$-Weil divisor on $X$ such that $(X,\Delta+\Delta')$ is strongly $F$-regular and $K_X+\Delta+\Delta'$ is anti-ample.
\end{proof}

The notion of strong $F$-regularity is much stronger than the notion of klt in the case of a pair.

\begin{theorem}[{cf. \cite[Theorem 3.3]{HW02}}] \label{real}
Let $(X,\Delta)$ be a strongly $F$-regular pair. Then $(X,\Delta)$ is klt.
\end{theorem}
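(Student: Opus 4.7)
The plan is to prove the contrapositive: assuming $(X,\Delta)$ is a pair which fails to be klt, I will exhibit an effective Weil divisor $D$ on $X$ such that the inclusion
$$\mathcal{O}_X\hookrightarrow F^e_{X*}\mathcal{O}_X\bigl(\lceil (p^e-1)\Delta\rceil + D\bigr)$$
fails to split for every $e>0$, contradicting strong $F$-regularity. By the failure of klt there exist a proper birational morphism $f\colon Y\to X$ with $Y$ smooth and a prime divisor $E\subset Y$ such that $a_E:=\mult_E\bigl(K_Y-f^*(K_X+\Delta)\bigr)\le -1$; here the $\Q$-Cartierness of $K_X+\Delta$ (the pair hypothesis) is used crucially to form $f^*(K_X+\Delta)$.

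The heart of the argument will be to reformulate the splitting property via Grothendieck duality for the finite morphism $F_X^e$. An $\mathcal{O}_X$-linear map $\varphi\colon F^e_{X*}\mathcal{O}_X(\lceil (p^e-1)\Delta\rceil + D)\to \mathcal{O}_X$ corresponds, after identifying $(F^e_X)^{!}\mathcal{O}_X$ with the reflexive sheaf associated to $(1-p^e)K_X$, to a global section $\sigma$ of the reflexive sheaf $\mathcal{O}_X\bigl(\lfloor (1-p^e)(K_X+\Delta)\rfloor-D\bigr)^{\vee\vee}$, up to rounding corrections. The splitting requirement $\varphi(1)=1$ forces $\sigma$ to generate the stalk at the generic point of $X$.

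Next I would pull $\sigma$ back to $Y$ and measure its order along $E$. Using $K_Y=f^*(K_X+\Delta)+\sum a_iE_i$ with $a_E\le -1$, the pullback $f^*\sigma$ must vanish along $E$ with order at least $(p^e-1)(-a_E)-O(1)\ge p^e-1-O(1)$, where the $O(1)$ absorbs the rounding errors. Choosing $D$ to be any effective Weil divisor on $X$ whose pullback to $Y$ has positive multiplicity along $E$ (for instance a hyperplane section through a general point of the image $f(E)\subset X$), for all sufficiently large $e$ the forced vanishing of $f^*\sigma$ along $E$ contradicts the requirement that $\sigma$ generate at the generic point of $X$.

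The main obstacle will be the delicate bookkeeping at the Grothendieck-duality step: one must track reflexive hulls, the twist by $\omega_X^{1-p^e}$, and the rounding operations $\lceil(p^e-1)\Delta\rceil$ and $\lfloor(1-p^e)(K_X+\Delta)\rfloor$, ensuring that the discrepancies they introduce remain uniformly $O(1)$ as $e\to\infty$. The underlying principle, executed in detail in \cite{HW02}, is that the obstruction to Frobenius splittings along a divisor $E$ is governed exactly by the sign of the log discrepancy $A_{X,\Delta}(E)$, through the trace map for the relative dualizing sheaf $\omega_f$.
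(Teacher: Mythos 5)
The paper does not actually prove this statement; it is quoted directly from \cite[Theorem 3.3]{HW02}, so there is no internal proof to compare against. Your sketch follows the standard Hara--Watanabe duality argument, which is the right approach, but the concluding step as written does not close. You assert that $\varphi(1)=1$ forces $\sigma$ to generate the stalk at the generic point of $X$, and that the forced vanishing of $f^*\sigma$ along $E$ to order roughly $p^e$ contradicts this. It does not: $E$ is $f$-exceptional, so its generic point is not the generic point of $Y$, and a section can vanish to arbitrarily high order along $E$ while remaining nonzero---hence a generator---at the generic point of $X$. The condition $\varphi(1)=1$ has to be exploited at the divisorial valuation $\operatorname{ord}_E$ itself: writing $\varphi=\Phi\bigl(F^e_*(u\cdot(-))\bigr)$ with $\Phi$ the $F^e_*\mathcal{O}_{Y,E}$-module generator of $\Hom_{\mathcal{O}_{Y,E}}(F^e_*\mathcal{O}_{Y,E},\mathcal{O}_{Y,E})$, one has $\operatorname{ord}_E\bigl(\Phi(F^e_*x)\bigr)\ge\bigl(\operatorname{ord}_E(x)-(p^e-1)\bigr)/p^e$, so once $\operatorname{ord}_E(u)\ge p^e$ the element $\varphi(1)=\Phi(F^e_*u)$ lies in the maximal ideal of the DVR $\mathcal{O}_{Y,E}$ and therefore cannot equal $1$. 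That is the actual contradiction; ``generation at the generic point'' is far too weak to be violated by exceptional vanishing.

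A second, smaller issue is the choice of $D$. Requiring only $\operatorname{ord}_E(f^*D)>0$ and letting $e\to\infty$ handles $a_E<-1$, but in the boundary case $a_E=-1$ the term $(p^e-1)(a_E+1)$ vanishes and the inequality you obtain reads $\operatorname{ord}_E(f^*D)\le C$, where $C$ is the uniform rounding constant (controlled by the Cartier index of $K_X+\Delta$ and the denominators of $\Delta$) and is independent of $e$; taking $e$ large does not help, and you must instead arrange $\operatorname{ord}_E(f^*D)>C$, e.g.\ by replacing $D$ with a large multiple. Finally, since the statement is in positive characteristic you should not assume $Y$ smooth (resolutions need not exist); $Y$ normal with $E$ a divisor on it suffices, and is all the argument uses. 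With these repairs your outline becomes precisely the proof carried out in \cite{HW02}.
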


\subsection{Valuation}
In this subsection, we briefly recall basic notions from valuation theory. For background, see \cite{JM12}, \cite{Can20}, or \cite[§1.2]{Xu25}. We do not assume that $\operatorname{char} k=0$.

Let $X$ be a normal variety over a field $k$. A \emph{valuation} over the function field $k(X)$ is a map
$$
\nu:k(X)^{\times}\to \R
$$
such that
\begin{itemize}
    \item[(a)] $\nu(a)=0$ for all $a\in k^{\times}$,
    \item[(b)] $\nu(fg)=\nu(f)+\nu(g)$ for all $f,g\in k(X)^{\times}$, and
    \item[(c)] $\nu(f+g)\ge \min\{\nu(f),\nu(g)\}$ for all $f,g\in k(X)$.
\end{itemize}
We extend $\nu$ to $k(X)$ by setting $\nu(0):=+\infty$.
We say a valuation $\nu$ over $k(X)$ is \emph{centered} on $X$ if there is an affine open subscheme $U\subseteq X$ such that if we write $U=\Spec R$, then
$$ R\subseteq \mathcal{O}_{\nu}:=\{f\in k(X)\,|\,\nu(f)\ge 0\}.$$
Let $\mathfrak{m}_{\nu}:=\{f\in \mathcal{O}_{\nu}\,|\,\nu(f)>0\}$ be the maximal ideal in $\mathcal{O}_{\nu}$. We denote the point given by the prime ideal $R\cap \mathfrak{m}_{\nu}$ to be the \emph{center} $c_X(\nu)\subseteq X$. If a valuation $\nu$ over $k(X)$ is centered on $X$, then we say that $\nu$ is a valuation \emph{over} $X$. Let $x\in X$ be a (not necessarily closed) point, and define
$$ \mathrm{Val}_X:=\{\nu\text{ is a valuation over }X\},$$
$$ \mathrm{Val}_{X,x}:=\{\nu\text{ is a valuation over }X\text{ with }x=c_X(\nu)\},$$
and $\mathrm{Val}^*_X:=\mathrm{Val}_X\setminus \{0\}$.

\smallskip

If $E$ is a prime divisor over $X$, then the order of vanishing
$$
\operatorname{ord}_E:k(X)^{\times}\to \R
$$
is a valuation over $X$. For any $c>0$, a valuation of the form $c\cdot \operatorname{ord}_E$ is called \emph{divisorial}.

\smallskip

Let $f:X'\to X$ be a log resolution, and let $E_1,\dots,E_r$ be the irreducible components of a reduced simple normal crossings divisor
$$
E:=\sum_{i=1}^r E_i \subset X'.
$$
If $f$ is an isomorphism over $X\setminus \bigcup_i f(E_i)$, then $(X',E)\to X$ is called a \emph{log smooth model}. Fix $\alpha=(\alpha_1,\cdots,\alpha_r)\in \R_{\ge0}^r$ and assume $\bigcap_{i=1}^r E_i\neq\varnothing$. Let $C$ be a connected component of $\bigcap_i E_i$ with generic point $\eta$. Choose regular parameters $z_1,\dots,z_r\in \mathcal{O}_{X',\eta}$ such that $E_i=(z_i=0)$ for all $i$. For $f\in \mathcal{O}_{X',\eta}$ write
$$
f=\sum_{\beta\in \Z_{\ge0}^r} c_{\beta}\, z^{\beta},\,\,
z^{\beta}:=z_1^{\beta_1}\cdots z_r^{\beta_r}.
$$
Define
$$
\nu_{(X',E),\alpha}(f):=\min\left\{\sum_{i=1}^r \alpha_i \beta_i \ \Bigm|\ c_\beta\neq 0\right\}.
$$
This gives a valuation on $k(X)$, independent of the choice of such parameters. All such valuations are called \emph{quasi-monomial}. We denote by $\mathrm{QM}(X',E)$ the set of quasi-monomial valuations defined by $(X',E)$, and by $\mathrm{QM}_{\eta}(X',E)\subseteq \mathrm{QM}(X',E)$ those whose center equals $\eta$.

\smallskip

For a valuation $\nu$ over $X$ and an ideal sheaf $\mathfrak{a}\subset \mathcal{O}_X$, set
$$
\nu(\mathfrak{a}):=\min\{\nu(f)\mid f\in \mathfrak{a}_x\},\,\, x:=c_X(\nu).
$$
If $\mathfrak{a},\mathfrak{b}$ are coherent ideals, then
$$
\nu(\mathfrak{a}\mathfrak{b})=\nu(\mathfrak{a})+\nu(\mathfrak{b}).
$$

\smallskip

For any valuation $\nu$ over $X$ and for any ideal sheaf $\mathfrak{a}$ on $X$, we define
$$ \nu(\mathfrak{a}):=\min\{\nu(f)\,|\,f\in \mathfrak{a}_x\text{ where }x=c_X(\nu)\}.$$
Note that for any two ideals $\mathfrak{a},\mathfrak{b}$,
$$ \nu(\mathfrak{a}\mathfrak{b})=\nu(\mathfrak{a})+\nu(\mathfrak{b})$$

\smallskip

We can give a topology on $\mathrm{Val}_X$ as follows: the topology of $\mathrm{Val}_X$ is the weakest topology such that $\nu\mapsto \nu(\mathfrak{a})$ is a continuous function for any valuation $\nu$ over $X$.

\subsection{Log discrepancy}
In this subsection, we define the \emph{log discrepancy function} for any valuation. For a log smooth model $\left(X',E:=\sum^r_{i=1}E_i\right)\to X$ and a valuation $\nu$ over $X$, we define $\rho_{(X',E)}:\mathrm{Val}_X\to \mathrm{QM}(X',E)$ as
$$ \rho_{(X',E)}(\nu):=\nu_{(X',E),\left(\nu(E_1),\cdots,\nu(E_r)\right)}.$$

\begin{definition}
Let $(X,\Delta)$ be a klt pair, the \emph{log discrepancy function}
$$ A_{X,\Delta}:\mathrm{Val}_X\to [0,+\infty]$$
is defined as follows:
\begin{itemize}
    \item Let $\left(X',E:=\sum^r_{i=1}E_i\right)\to X$ be a log-smooth model of $X$, and let $\eta$ be the generic point of an irreducible component of $\bigcap E_i$. Let $\alpha:=(\alpha_1,\cdots,\alpha_r)\in \R^r_{\ge 0}$ be a tuple. We define
    $$ A_{X,\Delta}(\nu_{(X',E),\eta,\alpha}):=\sum^r_{i=1}\alpha_i\cdot A_{X,\Delta}(E_i).$$
    Note that if $\left(X'',E':=\sum^{r'}_{i=1} E'_i\right)\to X$ is another log smooth model such that there is $k$ prime components $E'_1,\cdots,E'_k$ with $\eta$ a generic point of $\bigcap^k_{i=1} E'_i$, $\alpha'\in \R^{r'}_{\ge 0}$, and $\nu_{(X',E),\eta,\alpha}=\nu_{(X'',E'),\eta,\alpha'}$, then $A_{X,\Delta}(\nu_{(X',E),\eta,\alpha})=A_{X,\Delta}(\nu_{(X'',E'),\eta,\alpha'})$ by \cite[Proposition 5.1]{JM12} and \cite[Lemma 4.3]{Can20}.
    \item Let us assume that the base field of $X$ is characteristic $0$, and let $\nu$ be a general valuation over $X$. We define
    $$ A_{X,\Delta}(\nu):=\sup_{(Y,E)\text{ log smooth model}}A_{X,\Delta}(\rho_{(Y,E)}(\nu)).$$
\end{itemize}
\end{definition}

\subsection{Log canonical threshold and Jonsson-Mustață conjecture}

Let $X$ be a normal variety, $\Phi\subseteq \Z_{\ge 0}$ a semigroup, and let $\mathfrak{a}_{\bullet}:=\{\mathfrak{a}_m\}_{m\in \Phi}$ be a set of ideals on $X$. We say $\mathfrak{a}_{\bullet}$ is a \emph{graded sequence of ideals} if
$$ \mathfrak{a}_m\cdot \mathfrak{a}_m\subseteq  \mathfrak{a}_{m+n}\text{ for all }m,n\in \Phi.$$

\begin{definition}
Let $(X,\Delta)$ be a klt pair, let $\mathfrak{a}$ be an ideal in $\mathcal{O}_X$ and let $\mathfrak{a}_{\bullet}:=\left\{\mathfrak{a}_m\right\}_{m\in \Phi}$ be a graded sequence of ideals in $\mathcal{O}_X$. We define the following:
$$ \nu(\mathfrak{a}_{\bullet}):=\inf_{m\in \Phi}\frac{\nu(\mathfrak{a}_m)}{m}\text{ for any }\nu\in \mathrm{Val}_X,$$
$$ \mathrm{lct}(X,\Delta,\mathfrak{a}):=\inf_{\nu\in \mathrm{Val}^*_X}\frac{A_{X,\Delta}(\nu)}{\nu(\mathfrak{a})},$$
and
$$ \mathrm{lct}(X,\Delta,\mathfrak{a}_{\bullet}):=\inf_{\nu\in \mathrm{Val}^*_X}\frac{A_{X,\Delta}(\nu)}{\nu(\mathfrak{a}_{\bullet})}.$$
We say that a valuation $\nu$ over $X$ \emph{computes} $\mathrm{lct}(X,\Delta,\mathfrak{a}_{\bullet})$ if
$$ \mathrm{lct}(X,\Delta,\mathfrak{a}_{\bullet})=\frac{A_{X,\Delta}(\nu)}{\nu(\mathfrak{a}_{\bullet})}.$$
\end{definition}

The following is an important theorem regarding the log canonical threshold of a graded sequence of ideals. It states that the infimum defining the log canonical threshold is achieved by a specific type of valuation. We refer to this theorem as the \emph{Jonsson-Mustață conjecture}, as it follows from \cite[Conjecture 7.4]{JM12} by setting $\mathfrak{q}=0$.

\begin{theorem}[{cf. \cite[Conjecture 7.4]{JM12} and \cite[Theorem 1.1]{Xu20}}] \label{quasar}
Let $(X,\Delta)$ be a klt pair, and let $\mathfrak{a}_{\bullet}$ be a graded sequence of ideals on $X$. Then there is a quasi-monomial valuation $\nu$ that computes $\mathrm{lct}(X,\Delta,\mathfrak{a}_{\bullet})$.
\end{theorem}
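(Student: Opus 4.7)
The plan is to follow Xu's strategy in \cite{Xu20}, whose two pillars are approximation by divisorial valuations and boundedness of Kollár components.

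First, I would reduce to a divisorial approximation. By Fekete's lemma applied to the graded sequence (using subadditivity $\nu(\mathfrak{a}_{m+n})\le \nu(\mathfrak{a}_m)+\nu(\mathfrak{a}_n)$), for every $\nu\in\mathrm{Val}_X$ one has $\nu(\mathfrak{a}_\bullet)=\lim_{m\to\infty}\nu(\mathfrak{a}_m)/m=\inf_m \nu(\mathfrak{a}_m)/m$, and a standard manipulation yields
\[
\lct(X,\Delta,\mathfrak{a}_\bullet) \;=\; \lim_{m\to\infty} m\cdot\lct(X,\Delta,\mathfrak{a}_m).
\]
For each fixed $m$, $\lct(X,\Delta,\mathfrak{a}_m)$ is computed by $\ord_{E_m}$ for some exceptional prime divisor $E_m$ on a log resolution of $(X,\Delta,\mathfrak{a}_m)$; this is the classical single-ideal case. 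The task therefore reduces to extracting a quasi-monomial limit from a suitably normalized subsequence of $\{\ord_{E_m}\}$.

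Second, using a plt blow-up built from \cite{BCHM10}, I would replace each $E_m$ by a \emph{Kollár component}: a projective birational morphism $f_m:Y_m\to X$ extracting a single prime divisor $S_m$ with $(Y_m,S_m+(f_m)_*^{-1}\Delta)$ plt and $-S_m$ being $f_m$-ample, such that $\ord_{S_m}$ still (approximately) computes $\lct(X,\Delta,\mathfrak{a}_m)$. After normalizing so that $A_{X,\Delta}(S_m)=1$, the crucial input is Birkar's boundedness of complements together with its local refinements: the family $\{(S_m,\mathrm{Diff}_{S_m})\}$ is bounded, so after passing to a subsequence every $S_m$ is extracted on one common log smooth model $(Y,E)\to X$ and appears as a vertex of $\mathrm{QM}(Y,E)$.

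Finally, the normalized valuations $\ord_{S_m}$ lie in a compact subset of the finite-dimensional simplex $\mathrm{QM}(Y,E)$, so a subsequence converges to some $\nu_\infty\in\mathrm{QM}(Y,E)$. Continuity of $\nu\mapsto \nu(\mathfrak{a}_k)$ on $\mathrm{QM}(Y,E)$ for each fixed $k$, linearity of $A_{X,\Delta}$ on $\mathrm{QM}(Y,E)$, and the Fekete convergence $\nu(\mathfrak{a}_k)/k\to \nu(\mathfrak{a}_\bullet)$ together force $\nu_\infty$ to compute $\lct(X,\Delta,\mathfrak{a}_\bullet)$. The main obstacle is the boundedness step in the preceding paragraph: it is precisely there that one must invoke deep global birational geometry (via Birkar's complements theorem), and it is what lets the argument surpass the earlier framework of \cite{JM12}.
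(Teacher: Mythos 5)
The paper does not prove this statement at all: Theorem \ref{quasar} is quoted as a known result of Xu (\cite[Theorem 1.1]{Xu20}), so there is no internal proof to compare against, and your task was really to reconstruct Xu's argument. Your outline follows the correct overall strategy (approximation $\lct(X,\Delta,\mathfrak{a}_\bullet)=\lim_m m\cdot\lct(X,\Delta,\mathfrak{a}_m)$, replacement of the computing divisors by Koll\'ar components via \cite{BCHM10}, and a compactness/continuity argument inside a fixed simplex of quasi-monomial valuations), and the first and last paragraphs are essentially right.

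The gap is in the middle step, which is the crux. Boundedness of the family of pairs $\{(S_m,\mathrm{Diff}_{S_m}(\cdot))\}$ does not by itself place the valuations $\ord_{S_m}$ on a single log smooth model over $X$, and they certainly cannot all ``appear as vertices of $\mathrm{QM}(Y,E)$'': a fixed log smooth model has only finitely many vertices, while the $S_m$ are in general infinitely many distinct divisors. The actual mechanism in \cite{Xu20} (and in \cite{BLX22}) is via complements: Birkar's theorem \cite{Bir19} gives a uniform $N$ such that each Koll\'ar component $S_m$ is an lc place of an $N$-complement $D_m$ of $(X,\Delta)$; the $D_m$ vary in a bounded family (a constructible subset of $\P\bigl(H^0(-N(K_X+\Delta))\bigr)$); a Noetherian stratification of this parameter space admits a simultaneous log resolution over each stratum, so the lc places of \emph{all} members of the family lie in finitely many quasi-monomial simplices $\mathrm{QM}_\eta(Y,E)$. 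Only then does a subsequence of the normalized $\ord_{S_m}$ sit as (rational, generally interior) points of one compact simplex $\{A_{X,\Delta}=1\}\cap \mathrm{QM}_\eta(Y,E)$, where your limiting argument applies. As written, your step from ``bounded family'' to ``common log smooth model'' is an assertion of exactly the hard point rather than a proof of it.
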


Let us recall the following lemma (cf. \cite[Lemma 6.1 and Corollary 6.4]{JM12}  and \cite[Lemma 2.7]{Kim25b}).

\begin{lemma}[{cf. \cite[Lemma 6.1 and Corollary 6.4]{JM12} and \cite[Lemma 2.7]{Kim25b}}] \label{youareeasy'}
Let $X$ be a variety over $k$, and let $\mathfrak{a}_{\bullet}$ be a graded sequence of ideals in $\mathcal{O}_X$. Then the function $\nu\mapsto \nu(\mathfrak{a}_{\bullet})$ is upper-semicontinuous on $\mathrm{Val}_X$. Moreover, if $k$ is characteristic $0$, then the function is continuous on $\mathrm{Val}_X\cap \{A_{X,\Delta}(\nu)<\infty\}$.
\end{lemma}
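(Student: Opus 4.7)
The plan is to treat the two assertions separately. The upper semi-continuity holds without any restriction on $k$ or on $(X,\Delta)$. By the very definition of the topology on $\mathrm{Val}_X$, the evaluation $\nu \mapsto \nu(\mathfrak{a}_m)$ is continuous for each $m \in \Phi$, and hence so is $\nu \mapsto \nu(\mathfrak{a}_m)/m$. Since
$$ \nu(\mathfrak{a}_\bullet) = \inf_{m \in \Phi} \frac{\nu(\mathfrak{a}_m)}{m}, $$
the function is a pointwise infimum of continuous functions, which is automatically upper semi-continuous.

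For the continuity statement in characteristic zero, the strategy is to produce a family of continuous functions approximating $\nu \mapsto \nu(\mathfrak{a}_\bullet)$ uniformly on each sublevel set $\{A_{X,\Delta} \leq C\}$ of the log-discrepancy function. For each $\lambda > 0$ I would form the asymptotic multiplier ideal $\mathcal{J}^\lambda := \mathcal{J}(X,\Delta;\mathfrak{a}_\bullet^\lambda)$, a coherent ideal of $\mathcal{O}_X$ built from log resolutions and Kawamata--Viehweg vanishing. Since $\mathcal{J}^\lambda$ is a fixed ideal on $X$, the function $\nu \mapsto \nu(\mathcal{J}^\lambda)/\lambda$ is continuous on $\mathrm{Val}_X$. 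The key input would be the inequality
$$ \lambda\,\nu(\mathfrak{a}_\bullet) - A_{X,\Delta}(\nu) \;\leq\; \nu(\mathcal{J}^\lambda) \;\leq\; \lambda\,\nu(\mathfrak{a}_\bullet), $$
valid for any $\nu$ with $A_{X,\Delta}(\nu) < \infty$. The upper bound would follow from subadditivity of asymptotic multiplier ideals, while the lower bound would follow from the valuative characterization of $\mathcal{J}^\lambda$ realized as $\mathcal{J}(\mathfrak{a}_m^{\lambda/m})$ for sufficiently divisible $m$. Sending $\lambda \to \infty$ on $\{A_{X,\Delta} \leq C\}$ then shows that $\nu(\mathcal{J}^\lambda)/\lambda$ converges to $\nu(\mathfrak{a}_\bullet)$ uniformly on this sublevel set; a uniform limit of continuous functions being continuous, this yields continuity on each sublevel set. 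Since any point of $\{A_{X,\Delta} < \infty\}$ lies in some such sublevel set, continuity on the full locus follows.

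The main obstacle is the essential use of characteristic zero through the asymptotic multiplier ideal machinery: both the construction of $\mathcal{J}^\lambda$ and the subadditivity estimate rely on log resolutions and Kawamata--Viehweg vanishing, with no adequate substitute in positive characteristic, so in general one cannot go beyond upper semi-continuity. The restriction to $\{A_{X,\Delta}(\nu) < \infty\}$ is likewise unavoidable, since the error term $A_{X,\Delta}(\nu)/\lambda$ in the key inequality cannot be driven to zero as $\lambda \to \infty$ when $A_{X,\Delta}(\nu) = \infty$.
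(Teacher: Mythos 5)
The paper does not prove this lemma at all: it is recalled verbatim from \cite[Lemma 6.1 and Corollary 6.4]{JM12}, and your argument is precisely the argument of that reference --- upper semicontinuity as an infimum of the continuous functions $\nu\mapsto\nu(\mathfrak{a}_m)/m$, and continuity on $\{A_{X,\Delta}<\infty\}$ via the asymptotic multiplier ideals $\mathcal{J}^\lambda$ and the two-sided bound $\lambda\nu(\mathfrak{a}_\bullet)-A_{X,\Delta}(\nu)\le\nu(\mathcal{J}^\lambda)\le\lambda\nu(\mathfrak{a}_\bullet)$. One small wrinkle in your last step: continuity on each closed sublevel set $\{A_{X,\Delta}\le C\}$ does not formally give continuity on the union $\{A_{X,\Delta}<\infty\}$, since a net converging to a point of that locus need not eventually lie in a fixed sublevel set; but your own key inequality repairs this immediately, because $\nu(\mathfrak{a}_\bullet)\ge\nu(\mathcal{J}^\lambda)/\lambda$ everywhere exhibits the function as bounded below by continuous functions whose supremum equals $\nu(\mathfrak{a}_\bullet)$ wherever $A_{X,\Delta}$ is finite, giving lower semicontinuity there and hence, together with global upper semicontinuity, the claimed continuity.
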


\subsection{Models}
In this subsection, we define the notion of \emph{model} that is necessary to allow us to use the reduction mod $p$ method. For more information about the notion of model, see \cite[Chapter 2]{HH99}.

\begin{definition}
Let $X$ be a variety over $\C$, let $D$ be an effective Weil divisor on $X$, and $f:X'\to X$ a morphism between varieties. Suppose $R$ is a finitely generated $\Z$-algebra.
\begin{itemize}
    \item[(a)] If $X_R$ a flat $R$-scheme with $X_R\times_R \C=X$, then we say that $X_R$ is a \emph{model} of $X$ over $R$.
    \item[(b)] If $X_R$ is a model of $X$ over $R$, and if $D_R\subseteq X_R$ is an effective Weil divisor on $X_R$ flat over $R$ with $(D_R\subseteq X_R)\times_R \C=D\subseteq X$, then we say that $D_R$ is a \emph{model} of $X$ over $R$.
    \item[(c)] If $f_R:X'_R\to X_R$ is an $R$-morphism with $f_R\times_R \C=f$ and $X_R,X'_R$ flat over $R$, then we say that $f_R$ is a \emph{model} of $f$ over $R$.
    \item[(d)] Let $E$ be an effective divisor on $E$ such that $(X',E)\to X$ is a log smooth model. If $f_R:X'_R\to X_R$ and $E_R$ are models of $f$ and $E$ over $R$ respectively, $E_R$ is snc, and $f_R$ is an isomorphism outside $E_R$, then we say that $(X'_R,E_R)\to X_R$ is a \emph{model} of $(X',E)\to X$ over $R$.
    \item[(e)] If $X_R$ is a model of $X$ over $R$,
    $$ D=\sum q_i D_i$$
    is a $\Q$-Weil divisor with $q_i\in \Q$ and prime $D_i$, and $D_{iR}$ is a model of $D_i$ over $R$ for each $i$, then we say that
    $$ D_R:=\sum q_iD_{iR}$$
    is a \emph{model} of $D$ over $R$. Note that if $D$ is $\Q$-Cartier, then we may choose $D_R$ as a $\Q$-Cartier divisor on $X_R$.
    \item[(f)] Let $(X,\Delta)$ be a couple over $\C$, $X_R$ a model of $X$ over $R$, and $\Delta_R$ is so. Then we say that $(X_R,\Delta_R)$ is a \emph{model} of $(X,\Delta)$ over $R$.
\end{itemize}
\end{definition}

\begin{notation} \label{nota}
Fix a finitely generated $\Z$-algebra $R$ with closed point $s$. Let $X_R,X'_R$ be flat $R$-schemes and $f_R:X'_R\to X_R$ be an $R$-morphism. We denote by
$$ X_{\overline{s}}:=X_R|_{\overline{s}}, X'_{\overline{s}}:=X'_R|_{\overline{s}}, f_{\overline{s}}:=f_R|_{\overline{s}}.$$
If
$$ D_R=\sum r_i D_i$$
is a Weil $\R$-divisor on $X_R$ such that each $D_i$ is prime and flat over $R$, then we denote
$$ D_{\overline{s}}:=\sum r_i D_{R}|_{\overline{s}} $$
as a Weil $\R$-divisor on $X_{\overline{s}}$. Note that if $D_R$ is $\Q$-Cartier, then $D_{\overline{s}}$ is $\Q$-Cartier as well. 

\smallskip

Let $\left(X',E:=\sum^r_{i=1}E_i\right)\to X$ be a log smooth model, and $\left(X'_R,E_R:=\sum^r_{i=1}E_{iR}\right)\to X_R$  a model of $(X',E)\to X$ over $R$. Then for every $1\le i_1<\cdots<i_k\le r$ and $s\in \Spec R$, there is a one-to-one correspondence between connected components of $\bigcap^k_{i=1}E_{i_j}$ and $\bigcap^k_{i=1}E_{i_j\overline{s}}$, and if $\eta$ is a generic point of $\bigcap^k_{i=1}E_{i_j}$, then we define $\eta_{\overline{s}}$ as the generic point of $\bigcap^k_{i=1}E_{i_j\overline{s}}$ corresponding to $\eta$.
\end{notation}

Let us prove a proposition about log discrepancy in a family

\begin{proposition} \label{disc 2}
Let $(X,\Delta)$ be a pair over $\C$, $f:X'\to X$ a proper birational morphism with $E$ a prime divisor on $X'$, $R$ a finitely generated $\Z$-algebra and $(X_R,\Delta_R)$ and $f_R:X'_R\to X_R$ models of $(X,\Delta)$ and $f$ over $R$ respectively. Denote by $E_R$ the model of $E$ over $R$ defined by $f_R$. Then,
$$ A_{X,\Delta}(E)=A_{X_{\overline{s}},\Delta_{\overline{s}}}(E_{\overline{s}})$$
for an $s\in \Spec R$ (for the notations, see Notation \ref{nota}).
\end{proposition}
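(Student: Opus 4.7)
The plan is straightforward: after replacing $\Spec R$ by a dense open subscheme, the discrepancy equation on $X'$ spreads out to an equation of $\Q$-Weil divisors on $X'_R$ whose formation commutes with restriction to a geometric closed fiber, and then one reads off the multiplicity along $E_{\overline{s}}$.

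In more detail, I would first shrink $\Spec R$ so that the following hold simultaneously on the chosen models: $X_R$ and $X'_R$ are normal and flat over $R$ with geometrically normal fibers; $f_R$ is a proper birational $R$-morphism; the relative dualizing sheaves $\omega_{X_R/R}$ and $\omega_{X'_R/R}$ are reflexive of rank one and commute with arbitrary base change (so that $K_{X_R/R}|_{\overline{s}} = K_{X_{\overline{s}}}$ and $K_{X'_R/R}|_{\overline{s}} = K_{X'_{\overline{s}}}$); $K_{X_R/R}+\Delta_R$ is $\Q$-Cartier, which is possible because $(X,\Delta)$ is a pair; and the prime divisor $E_R$ together with the finitely many prime divisors in $\Exc(f_R)\cup \Supp(f_{R\ast}^{-1}\Delta_R)\cup \{E_R\}$ are all flat over $R$ with geometrically prime fibers. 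Each of these is an open condition on $\Spec R$ by generic flatness and the standard spreading-out principles, so we can find a dense open where all of them hold and then pick any $s$ in that locus.

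Next, on $X'_R$ I would write the discrepancy decomposition
$$ K_{X'_R/R} - f_R^\ast(K_{X_R/R}+\Delta_R) = \sum_i a_i F_{iR}, $$
where $F_{iR}$ runs over the finitely many prime divisors listed above and $a_i\in \Q$ are the usual discrepancy coefficients; one of the $F_{iR}$ is $E_R$ itself, say $F_{i_0 R}=E_R$, and by construction $a_{i_0}+1 = A_{X,\Delta}(E)$. Restricting the displayed equation to the geometric fiber at $\overline{s}$, the $\Q$-Cartier pullback $f_R^\ast(K_{X_R/R}+\Delta_R)$ commutes with base change, the relative canonical divisors specialize to $K_{X_{\overline{s}}}$ and $K_{X'_{\overline{s}}}$ by the choice of model, and each $F_{iR}|_{\overline{s}} = F_{i\overline{s}}$ is prime. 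Hence we obtain
$$ K_{X'_{\overline{s}}} - f_{\overline{s}}^\ast(K_{X_{\overline{s}}}+\Delta_{\overline{s}}) = \sum_i a_i F_{i\overline{s}} $$
on $X'_{\overline{s}}$, and reading off the coefficient along $E_{\overline{s}}=F_{i_0\overline{s}}$ yields $A_{X_{\overline{s}},\Delta_{\overline{s}}}(E_{\overline{s}}) = a_{i_0}+1 = A_{X,\Delta}(E)$.

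The main obstacle is the compatibility of the canonical divisor with base change to a characteristic-$p$ fiber: one must verify that $K_{X_R/R}|_{\overline{s}}$ equals the absolute canonical $K_{X_{\overline{s}}}$, and similarly for $X'_R$. This is what forces us to shrink $\Spec R$ so that the fibers are sufficiently regular (normal, hence $S_2$, and Gorenstein in codimension one), ensuring that $\omega_{X_R/R}$ and $\omega_{X'_R/R}$ are reflexive of rank one and that their formation commutes with base change. Once these fibral regularity conditions are secured on a dense open of $\Spec R$, the remainder of the argument is formal.
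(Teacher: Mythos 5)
Your proposal is correct and follows essentially the same route as the paper: spread the discrepancy equation $K_{X'_R/R}-f_R^*(K_{X_R/R}+\Delta_R)$ out over $R$ and read off the coefficient of $E_R$ on both the generic and the geometric special fiber. The paper's proof is just a terser version of this, leaving implicit the base-change compatibility of the (relative) canonical divisors that you spell out.
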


\begin{proof}
Observe
$$
\begin{aligned}
\mathrm{mult}_{E_R}(f^*_R(K_{X_R}+\Delta_R)-K_{X'_R})+1&=\mathrm{mult}_{E}(f^*(K_{X}+\Delta)-K_{X'})+1
\\ &=A_{X,\Delta}(E),
\end{aligned}$$
and similarly, 
$$ \mathrm{mult}_{E_R}(f^*_R(K_{X_R}+\Delta_R)-K_{X'_R})+1=A_{X_{\overline{s}},\Delta_{\overline{s}}}(E_{\overline{s}})$$
for an $s\in \Spec R$.
\end{proof}

We define the notion of \emph{strongly $F$-regular type}.

\begin{definition} \label{str}
Let $(X,\Delta)$ be a couple over $\C$, and let $(X_R,\Delta_R)$ be a model of $(X,\Delta)$ over $R$. We say $(X,\Delta)$ is a \emph{strongly $F$-regular type couple} if for a general $s\in \Spec R$, $(X_{\overline{s}},\Delta_{\overline{s}})$ is a strongly $F$-regular couple (Notation \ref{nota}).
\end{definition}

\subsection{Toroidal blowup} \label{toroidal}
In this subsection, we define \emph{toroidal blowup} and prove a lemma.

\smallskip

Let $\left(X',E=\sum^r_{i=1}E_i\right)$ be a couple of a regular scheme $X'$ and an snc divisor $E\subseteq X'$. Let $1\le i_1<i_2<\cdots<i_k\le r$. We say that the blowup of $X'$ along $\bigcap^k_{j=1}E_{i_j}$ is a \emph{toroidal blowup}. Note that any toroidal blowup is a regular scheme because it is a blowup of a regular scheme along a closed regular subscheme.

\smallskip

Let us prove the following lemma:

\begin{lemma} \label{detail}
Let $X$ be a normal variety over $\C$, and let $\left(X',E=\sum^r_{i=1}E_i\right)$ be a log-smooth model over $X$, and $\eta$ the generic point of a connected component of the intersection of $k$ prime components of $E$. Then there exists a finitely generated $\Z$-algebra $R$ and models $X_R$ and $\left(X'_R,E_R=\sum^r_{i=1}E_{iR}\right)$ of $X$, $(X',E)$ over $R$ respectively with the following property: 

\smallskip

For every $\alpha\in \Q^k_{\ge 0}$, there is the composition of a sequence of toroidal blowups $f':X''_R\to X'_R$, a prime divisor $F_R$ on $X''_R$ and a positive number $c>0$ such that 
$$\nu_{(X',E),\eta,\alpha}=c\cdot \mathrm{ord}_F\,\,\,\text{ and }\,\,\,\nu_{(X'_{\overline{s}},E_{\overline{s}}),\eta_{\overline{s}},\alpha}=c\cdot \mathrm{ord}_{F_{\overline{s}}}$$
for every closed point $s\in \Spec R$ (Notation \ref{nota}).
\end{lemma}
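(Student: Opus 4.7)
First, by standard spreading-out, I would produce a finitely generated $\Z$-algebra $R\subset\C$ and models $X_R$ and $(X'_R, E_R=\sum_i E_{iR})\to X_R$ of the given data. After replacing $\Spec R$ by a suitable open subset, I arrange that every $E_{iR}$ and every scheme-theoretic intersection $\bigcap_{j\in J} E_{i_j R}$ is $R$-flat with geometrically smooth, geometrically reduced fibers whose connected components are in natural bijection with those of the corresponding intersections on $X'_{\overline{s}}$ for each closed point $s\in\Spec R$, as in Notation \ref{nota}. In particular, $\eta_R$ makes sense and specializes to $\eta_{\overline{s}}$. The crucial feature here is that these conditions involve only the fixed model data $(X'_R,E_R)$ and do \emph{not} depend on $\alpha$, so a single $R$ will support the whole statement.

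Second, for each $\alpha\in\Q^k_{\ge 0}$, I use the toric structure of $(X',E)$ near $\eta$. Étale-locally there are parameters $z_1,\dots,z_r\in\mathcal{O}_{X',\eta}$ with $E_{i_j}=(z_{i_j}=0)$ for $j=1,\dots,k$, and the rational simplicial cone $\R^k_{\ge 0}$ parameterizes quasi-monomial valuations centered on $\bigcap^k_{j=1}E_{i_j}$ at $\eta$. The ray $\R_{\ge 0}\alpha$ is rational, so it has a primitive lattice generator $v$ with $v=c^{-1}\alpha$ for a unique positive rational $c$. Applying the classical toric algorithm---a finite sequence of star subdivisions of the fan $\R^k_{\ge 0}$---one obtains a regular refinement in which $\R_{\ge 0}v$ appears as a one-dimensional cone. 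Each such star subdivision corresponds to blowing up the intersection of those $E_i$'s (or their successive strict transforms) indexed by the cone being subdivided, which is a toroidal blowup in the sense of Subsection \ref{toroidal}. The resulting composition $f':X''\to X'$ extracts a prime divisor $F$ whose associated divisorial valuation is exactly the quasi-monomial valuation with weight vector $v$, giving $\operatorname{ord}_F=c^{-1}\nu_{(X',E),\eta,\alpha}$, i.e., $\nu_{(X',E),\eta,\alpha}=c\cdot\operatorname{ord}_F$.

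Third, I perform the same sequence of blowups over $R$. At each step the center is the intersection of certain divisors among $\{E_{iR}\}$ and previously created exceptional divisors, all of which are $R$-flat by Step 1; since blowup commutes with flat base change, the resulting $X''_R\to X'_R$ is a composition of toroidal blowups over $R$, and its base change to $\overline{s}$ is the analogous composition on $X'_{\overline{s}}$ along the specialized centers. Let $F_R\subset X''_R$ be the prime divisor obtained from the last blowup over the connected component corresponding to $\eta$; after possibly shrinking $R$ once more, $F_R$ is $R$-flat with geometrically integral fibers $F_{\overline{s}}$. The identity $\nu_{(X',E),\eta,\alpha}=c\cdot\operatorname{ord}_F$ is a computation with the local parameters $z_j$ at $\eta_R$, which are stable under base change, so the same identity with the same constant $c$ descends to the fiber: $\nu_{(X'_{\overline{s}},E_{\overline{s}}),\eta_{\overline{s}},\alpha}=c\cdot\operatorname{ord}_{F_{\overline{s}}}$.

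The main subtlety to address carefully is the \emph{uniformity} of $R$: the blowup tower depends on $\alpha$, and a priori one might fear that each $\alpha$ forces a new shrinking of $\Spec R$. The key observation is that any toroidal blowup of a log-smooth pair whose snc components and intersections are $R$-flat with geometrically smooth fibers yields another such pair (regularity, flatness, and smoothness of intersections are preserved by blowing up a smooth $R$-flat snc stratum). Thus the single $R$ chosen in Step 1 suffices for all $\alpha$ simultaneously, and no further $\alpha$-dependent shrinking is needed once that uniform model is fixed.
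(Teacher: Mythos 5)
Your proposal is correct and follows essentially the same route as the paper: spread out the model data, realize the rational quasi-monomial valuation $\nu_{(X',E),\eta,\alpha}$ as $c\cdot\operatorname{ord}_F$ for a divisor $F$ extracted by a tower of toroidal blowups (the paper cites \cite[Lemma 3.6 (ii)]{JM12} where you unpack the same star-subdivision algorithm), spread out that tower over $R$, and check the fiberwise identity via the combinatorial characterization of the weights $\nu(E_{i\overline{s}})=\alpha_i$. Your explicit discussion of why the single $R$ works uniformly for all $\alpha$ is a point the paper leaves implicit, and is a welcome clarification rather than a deviation.
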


\begin{proof}
There is a finitely generated $\Z$-algebra $R$, and models $X_R$ and $(X'_R,E_R)$ of $X$ and $(X',E)$ over $R$ respectively.

\smallskip

Note that for every $\alpha:=(\alpha_1,\cdots,\alpha_r)\in \Q^r_{>0}$, there is the composition of a sequence of toroidal blowups $f':X''\to X'$ and a prime component $F$ of $\mathrm{Exc}(f')\cup \Supp (f')^{-1}_*E$ such that there is a positive number $c>0$ such that
$$ \nu_{(X',E),\eta,\alpha}=c\cdot \mathrm{ord}_F$$
(cf. \cite[Lemma 3.6 (ii)]{JM12}). Moreover, since $f'$ is the composition of a sequence of toroidal blowups, we obtain that there is a model $f'_R:X''_R\to X'_R$ of $f'$ over $R$. Note that $\nu_{(X'_{\overline{s}},E_{\overline{s}}),\eta_{\overline{s}},\alpha}$ can be characterized as follows: $\nu_{(X'_{\overline{s}},E_{\overline{s}}),\eta_{\overline{s}},\alpha}$ is the divisorial valuation in $\mathrm{QM}_{\eta}(X'_{\overline{s}},E_{\overline{s}})$ such that
$$ \nu_{(X'_{\overline{s}},E_{\overline{s}}),\eta_{\overline{s}},\alpha}(E_{i\overline{s}})=\alpha_i.$$

\smallskip

Since $f'_{\overline{s}}:X''_{\overline{s}}\to X'_{\overline{s}}$ is the composition of a sequence of toroidal blowups of $(X'_{\overline{s}},E_{\overline{s}})$, $c\cdot \mathrm{ord}_{F_{\overline{s}}}\in \mathrm{QM}_{\eta}(X'_{\overline{s}},E_{\overline{s}})$. Moreover, since
$$ c\cdot \mathrm{mult}_F((f')^*E_i)=c\cdot \mathrm{ord}_F(E_i)=\alpha_i,$$
we obtain that $c\cdot \mathrm{mult}_{F_R}((f'_R)^*E_{iR})=\alpha_i$, and therefore 
$$c\cdot \mathrm{mult}_{F_{\overline{s}}}((f'_{\overline{s}})^*E_{i\overline{s}})=c\cdot \mathrm{ord}_{F_{\overline{s}}}(E_{i\overline{s}})=\alpha_i.$$
Hence,
$$ \nu_{(X'_{\overline{s}},E_{\overline{s}}),\eta_{\overline{s}},\alpha}=c\cdot \mathrm{ord}_{F_{\overline{s}}}$$
as we want.
\end{proof}

\section{Natural valuation and natural pullback} \label{333}
In this section, we introduce the notion of \emph{natural valuation} and \emph{natural pullback}, and collect basic lemmas about them. We keep the standing notation of Notation \ref{nota}.

\begin{definition}
Let $X$ be a normal variety, and let $\nu\in \mathrm{Val}_X$ be a valuation over $X$. Suppose $D$ is a Weil divisor on $X$. We define
$$ \nu^{\natural}(D):=\nu^{\natural}(\mathcal{O}_X(-D)).$$
Moreover, we define
$$ \nu(D):=\lim_{m\to \infty}\frac{1}{m!}\nu^{\natural}(m!D).$$
Note that the limit exists by \cite[Lemma 2.8]{dFH09}. If $D$ is a $\Q$-Weil divisor, then we define
$$ \nu(D):=\frac{1}{m}\nu(mD),$$
where $m$ is a positive integer with $mD$ Weil.
\end{definition}

\begin{remark} \label{natural0}
Let $D$ be a Cartier divisor. Then $\nu^{\natural}(D)$ coincides with the notion of order of vanishing of $D$ along $\nu$.
\smallskip

For any two Weil divisors $D,D'$ on $X$, we have 
$$\nu^{\natural}(D+D')\le \nu^{\natural}(D)+\nu^{\natural}(D').$$
Indeed, let $U\subseteq X$ be an open subscheme. If $\phi\in \mathcal{O}_X(-D)(U)$ and $\phi'\in \mathcal{O}_X(-D')(U)$, then $\mathrm{div} \,\phi\ge D$ and $\mathrm{div}\,\phi'\ge D'$ on $U$, and therefore $\mathrm{div}\,(\phi\cdot \phi')\ge D+D'$. Thus, $\phi \cdot \phi'\in \mathcal{O}_X(-D-D')(U)$, and $\mathcal{O}_{X}(-D)\cdot \mathcal{O}_X(-D')\subseteq \mathcal{O}_X(-(D+D'))$. The equality holds if either $D$ or $D'$ is Cartier (cf. \cite[Proposition 2.10]{dFH09}).

\smallskip

For any two $\Q$-Weil divisors $D,D'$ on $X$,
$$ \nu(D+D')\le \nu(D)+\nu(D'),$$
and the equality holds if either $D$ or $D'$ is $\Q$-Cartier. Unfortunately, the equality does not hold in general. See \cite[Example 2.12]{dFH09}.
\end{remark}

We define the notion of \emph{natural pullback}, which serves as a version of pullback in the case of a Weil divisor.

\begin{definition}
Let $f:X'\to X$ be a proper birational morphism between two normal varieties, and let $D$ be a Weil divisor on $X$. We define $f^{\natural}D$ as the Weil divisor on $X'$ with
$$ \mathcal{O}_{X'}(-f^{\natural}D)=(\mathcal{O}_X(-D)\cdot \mathcal{O}_{X'})^{\vee\vee}.$$
Moreover, we define
$$ f^*D:=\lim_{m\to \infty}\frac{1}{m!}f^*(m!D)$$
as an $\R$-Weil divisor on $X'$.
\end{definition}

\begin{remark} \label{natural}
Let us collect basic facts about the natural pullback.

\smallskip

\begin{itemize}

\item[(a)] From the natural map $f^*\mathcal{O}_X(-D)\to (f^*\mathcal{O}_X(-D))^{\vee\vee}$ and through adjunction, there is an induced natural map $\mathcal{O}_X(-D)\to f_*(f^*\mathcal{O}_X(-D))^{\vee\vee}$. Additionally, via the multiplication map 
$$f^*\mathcal{O}_X(-D):=\mathcal{O}_X(-D)\otimes_{\mathcal{O}_X}\mathcal{O}_{X'}\to \mathcal{O}_X(-D)\cdot \mathcal{O}_{X'}$$
inside $k(X')$, another natural map arises: $\mathcal{O}_X(-D)\to f_*\mathcal{O}_{X'}(-f^{\natural}D)$.

\item[(b)] Given two linearly equivalent Weil divisors $D$ and $D'$ on $X$, there exists an isomorphism $\imath:\mathcal{O}_X(-D)\cong \mathcal{O}_X(-D')$. Consequently, we have an isomorphism
$$ \imath\cdot \mathcal{O}_{X'}:\mathcal{O}_X(-D)\cdot \mathcal{O}_{X'}\cong \mathcal{O}_X(-D')\cdot \mathcal{O}_{X'}.$$
By taking the double dual on both sides, we conclude that $f^{\natural}D\sim f^{\natural}D'$, demonstrating that the natural pullback preserves linear equivalence.

\item[(c)] We can write $f^{\natural}D$ as follows:
$$ f^{\natural}D=\sum_{E\text{ prime ideal on }X'}\mathrm{ord}^{\natural}_E(D)E.$$
Moreover, if $E$ is a prime divisor on $X'$, then $\mathrm{ord}^{\natural}_E(D)=\mathrm{ord}^{\natural}_E(f^{\natural}D)$. In fact,
$$ 
\begin{aligned}
    \mathrm{ord}^{\natural}_E(D)&=\mathrm{ord}_E(\mathcal{O}_X(-D))
    \\ &=\mathrm{ord}_E(\mathcal{O}_X(-D)\cdot \mathcal{O}_{X'})
    \\ &=\mathrm{ord}_E((\mathcal{O}_X(-D)\cdot \mathcal{O}_{X'})^{\vee\vee})
    \\ &=\mathrm{ord}_E(\mathcal{O}_X(-f^{\natural}D))
    \\ &=\mathrm{ord}^{\natural}_E(f^{\natural}D).
\end{aligned}
$$
Note that if $E$ is just a prime divisor over $X'$, not on $X'$, then the statement is false. See \cite[Example 2.12]{dFH09}.

\item[(d)] Unfortunately, because of the double dual, natural pullback lacks many features of functoriality of $f$. For example, we do not have $(f\circ g)^{\natural}D=g^{\natural}f^{\natural}D$, where $g:X''\to X'$ is a proper birational morphism between normal varieties. Instead of that, we have $(f\circ g)^{\natural}D-g^{\natural}f^{\natural}D$ is effective $g$-exceptional. Moreover, if $\mathcal{O}_X(-D)\cdot \mathcal{O}_{X'}$ is a line bundle, then $(f\circ g)^{\natural}D=g^{\natural}f^{\natural}D$ (cf. \cite[Remark 2.13]{dFH09} and \cite[Remark 2.12]{CEMS18}).

\item[(e)] Let $f:X'\to X$ be a small morphism. Then $f^{\natural}D$ is the strict transform of $D$, and thus
$$ f^{-1}_*D=f^{\natural}D=\frac{1}{m!}f^{\natural}(m!D)=f^{*}D$$
for every positive integer $m$ (cf. \cite[Remark 2.12]{CEMS18}).

\end{itemize}
\end{remark}

\begin{lemma} \label{q-birational}
Let $f:X'\to X$ be a proper birational morphism between two normal varieties, and let $D$ be a Weil divisor on $X$. Then the natural map $\mathcal{O}_X(-D)\to f_*\mathcal{O}_{X'}(-f^{\natural}D)$ constructed in Remark \ref{natural} (a) is an isomorphism.
\end{lemma}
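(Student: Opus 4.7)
The plan is to verify the isomorphism directly at the level of sections, using that both sides embed in the constant sheaf $k(X)=k(X')$ and that the natural map of Remark \ref{natural}(a) acts on sections as the identity $\phi\mapsto \phi$ inside $k(X')$. The key point is to express both section spaces as subsets of $k(X)$ cut out by inequalities at codimension-one valuations. Since $X$ is normal and $\mathcal{O}_X(-D)$ reflexive,
\[
\mathcal{O}_X(-D)(U)=\{\phi\in k(X):\mathrm{ord}_F(\phi)\ge \mathrm{ord}_F(D)\text{ for every prime }F\subseteq X\text{ with }\eta_F\in U\}.
\]
On the other side, $\mathcal{O}_{X'}(-f^{\natural}D)$ is reflexive as a double dual on the normal variety $X'$, with codimension-one coefficients given by $\mathrm{ord}^{\natural}_E(D)$ thanks to Remark \ref{natural}(c), so
\[
f_*\mathcal{O}_{X'}(-f^{\natural}D)(U)=\{\phi\in k(X'):\mathrm{ord}_E(\phi)\ge \mathrm{ord}^{\natural}_E(D)\text{ for every prime }E\subseteq X'\text{ with }\eta_E\in f^{-1}(U)\}.
\]
Thus the lemma reduces to checking that these two subsets of $k(X)$ agree.

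The inclusion $\subseteq$ is immediate from the defining formula of $\mathrm{ord}^{\natural}$: if $\phi\in \mathcal{O}_X(-D)(U)$ and $E$ is any prime divisor on $X'$ with $\eta_E\in f^{-1}(U)$, then $x:=f(\eta_E)\in U$ and $\phi\in \mathcal{O}_X(-D)_x$, hence $\mathrm{ord}_E(\phi)\ge \min\{\mathrm{ord}_E(\psi):\psi\in\mathcal{O}_X(-D)_x\}=\mathrm{ord}^{\natural}_E(D)$. For the reverse inclusion, it is enough to test $\phi$ on strict transforms of prime divisors of $X$. Given a prime divisor $F\subseteq X$ with $\eta_F\in U$, the non-isomorphism locus of $f$ has codimension $\ge 2$ in $X$ by normality of $X$, so $f$ is an isomorphism on an open neighborhood of $\eta_F$; consequently the strict transform $E:=f^{-1}_*F$ satisfies $\mathrm{ord}_E=\mathrm{ord}_F$ as valuations on $k(X)=k(X')$. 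Since $D$ is Cartier at the codimension-one point $\eta_F$, one has $\mathcal{O}_X(-D)_{\eta_F}=t^{\mathrm{ord}_F(D)}\mathcal{O}_{X,\eta_F}$ for a uniformizer $t$ of $F$, yielding $\mathrm{ord}^{\natural}_E(D)=\mathrm{ord}_F(D)$. The right-hand condition then forces $\mathrm{ord}_F(\phi)=\mathrm{ord}_E(\phi)\ge \mathrm{ord}^{\natural}_E(D)=\mathrm{ord}_F(D)$, and since $F$ was arbitrary we conclude $\phi\in \mathcal{O}_X(-D)(U)$.

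I do not anticipate a serious obstacle here: the entire argument is a codimension-one check made possible by the reflexivity of both sheaves. The one technical point to be handled with care is the reflexive-sheaf description of $\mathcal{O}_{X'}(-f^{\natural}D)$ in terms of valuations at primes of $X'$, which is built into its construction as a double dual together with Remark \ref{natural}(c). An alternative route would be to use the projection formula on the open locus $V\subseteq X$ where $D$ is Cartier (so that $f^{\natural}D=f^*D$ on $f^{-1}(V)$) and then invoke reflexivity to extend to all of $X$; however, since $f^{-1}(V)$ may have codimension-one complement in $X'$, pushing reflexivity back through $f$ requires extra justification, so the section-level approach above is the cleanest.
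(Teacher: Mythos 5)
Your proof is correct, but it takes a genuinely different route from the paper's. The paper argues homologically: it forms the exact sequence $0\to \mathcal{O}_X(-D)\to f_*\mathcal{O}_{X'}(-f^{\natural}D)\to \mathcal{Q}\to 0$, observes that $\codim\Supp\mathcal{Q}\ge 2$, and kills $\mathcal{Q}$ by local cohomology: at a point $x$ of codimension $\ge 2$ one has $H^1_x(X,\mathcal{O}_X(-D))=0$ because $\mathcal{O}_X(-D)$ is reflexive (hence $S_2$), so $H^0_x(X,\mathcal{Q})=0$, which is incompatible with $\mathcal{Q}\neq 0$ being supported in codimension $\ge 2$. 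You instead identify both section spaces explicitly inside $k(X)=k(X')$ via the divisorial description of reflexive rank-one sheaves on normal varieties and match the valuation conditions at codimension-one points; your key computation $\mathrm{ord}^{\natural}_{f^{-1}_*F}(D)=\mathrm{ord}_F(D)$ for a prime divisor $F\subseteq X$ (using that $f$ is an isomorphism near $\eta_F$ and that $D$ is Cartier at the DVR $\mathcal{O}_{X,\eta_F}$) is precisely the codimension-one agreement that the paper uses implicitly when asserting $\codim\Supp\mathcal{Q}\ge 2$. Your version is more elementary and self-contained for divisorial sheaves and makes the mechanism transparent (it also correctly flags why the "projection formula on the Cartier locus" shortcut needs care); the paper's local-cohomology argument is shorter and is the form that generalizes to reflexive sheaves of arbitrary rank. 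Both arguments are sound.
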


\begin{proof}
The proof of the lemma is the same as \cite[Lemma 3.6]{Kim25a}, and we recall the proof for the convenience of the readers.

\smallskip

Let $\mathcal{Q}$ be the cokernel of the natural morphism. Then we have an exact sequence
$$0\to \mathcal{O}_X(-D)\to f_*\mathcal{O}_{X'}(-f^{\natural}D)\to \mathcal{Q}\to 0.$$
Let $x$ be a codimension $\ge 2$ point in $X$. Note that $H^1_x(X,\mathcal{O}_X(-D))=0$ since $\mathcal{O}_X(-D)$ is a reflexive sheaf on $X$. By taking local cohomology, we obtain $ H^0_x(X,\mathcal{Q})=0$. Note that $\codim \Supp\mathcal{Q}\ge 2$. Hence, $\mathcal{Q}=0$.
\end{proof}

\begin{lemma} \label{pullbackpreservessurjection}
Let $X$ be an affine normal variety, and let $A$ be a Weil divisor on $X$. Then $\mathcal{O}_{X'}(-f^{\natural}(-A))$ is globally generated in codimension $1$.
\end{lemma}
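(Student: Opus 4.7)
My plan is to construct a globally generated coherent subsheaf $\mathcal{G} \subseteq \mathcal{O}_{X'}(-f^{\natural}(-A))$ whose cokernel is supported in codimension $\ge 2$. Since $X$ is affine, $\mathcal{O}_X(A)$ is generated by its global sections, and by adjunction one obtains a natural map $f^*\mathcal{O}_X(A) \to \mathcal{O}_{X'}(-f^{\natural}(-A))$ coming from the inclusion $\mathcal{O}_X(A)\cdot \mathcal{O}_{X'} \subseteq (\mathcal{O}_X(A)\cdot \mathcal{O}_{X'})^{\vee\vee} = \mathcal{O}_{X'}(-f^{\natural}(-A))$. I take $\mathcal{G}$ to be the image of this map; by Lemma \ref{q-birational} it is a coherent subsheaf generated by the global sections of $\mathcal{O}_{X'}(-f^{\natural}(-A))$, which are in canonical bijection with $\Gamma(X, \mathcal{O}_X(A))$.

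For any prime divisor $E$ on $X'$ with generic point $\eta_E$, center $x := f(\eta_E) \in X$, and uniformiser $\pi$ of the DVR $\mathcal{O}_{X', \eta_E}$, the stalk $\mathcal{G}_{\eta_E}$ is the $\mathcal{O}_{X', \eta_E}$-submodule of $k(X')$ generated by $\mathcal{O}_X(A)_x$; as a rank-one free module this equals $\pi^{a}\,\mathcal{O}_{X', \eta_E}$ with
$$a = \min\{\ord_E(g) : g \in \mathcal{O}_X(A)_x\}.$$
By Remark \ref{natural} (c) we have $a = \ord_E^{\natural}(-A) = \ord_E(f^{\natural}(-A))$, which is exactly the valuation of any generator of the stalk $\mathcal{O}_{X'}(-f^{\natural}(-A))_{\eta_E}$. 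Hence $\mathcal{G}_{\eta_E} = \mathcal{O}_{X'}(-f^{\natural}(-A))_{\eta_E}$, and the coherent cokernel $\mathcal{Q} := \mathcal{O}_{X'}(-f^{\natural}(-A))/\mathcal{G}$ satisfies $\codim_{X'}\Supp \mathcal{Q} \ge 2$. Therefore $\mathcal{O}_{X'}(-f^{\natural}(-A))|_U = \mathcal{G}|_U$ is globally generated on the open set $U := X' \setminus \Supp \mathcal{Q}$, as required.

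The step I expect to be the main obstacle is the stalk identification at $\eta_E$: one has to check that $\mathcal{G}_{\eta_E}$, defined \emph{a priori} through a double dualisation of a non-reflexive product sheaf, recovers precisely the submodule of $k(X')$ generated by $\mathcal{O}_X(A)_x$, and then invoke Remark \ref{natural} (c) to bridge the resulting valuative quantity with the coefficient of $E$ in $f^{\natural}(-A)$. Affineness itself is used only to supply the initial global generation of $\mathcal{O}_X(A)$.
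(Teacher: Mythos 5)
Your proposal is correct and follows essentially the same route as the paper: pull back the global generation of $\mathcal{O}_X(A)$ on the affine $X$ to get that the product sheaf $\mathcal{O}_X(A)\cdot\mathcal{O}_{X'}$ is globally generated, then observe that it agrees with its double dual $\mathcal{O}_{X'}(-f^{\natural}(-A))$ away from a codimension~$2$ subset. The only difference is cosmetic: the paper deduces the codimension-$1$ agreement from normality of $X'$ (torsion-free sheaves are reflexive at codimension-$1$ points), whereas you verify it explicitly by the stalk computation at generic points of prime divisors via Remark \ref{natural}~(c); both are valid.
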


\begin{proof}
Note that $\mathcal{O}_X(A)$ is globally generated. By definition of global generation, there are $s_1,\cdots,s_r\in H^0(X,\mathcal{O}_X(A))$ that induce a surjection
$$ \bigoplus \mathcal{O}_X\to \mathcal{O}_X(A),\,(f_1,\cdots,f_r)\mapsto \sum s_if_i.$$
Taking $f^*$ and considering the multiplication map $f^*\mathcal{O}_X(A)\to \mathcal{O}_X(A)\cdot \mathcal{O}_{X'}$ gives us a surjection
$$ \bigoplus \mathcal{O}_{X'}\to \mathcal{O}_X(A)\cdot \mathcal{O}_{X'},\,(f_1,\cdots,f_r)\mapsto \sum s_if_i.$$
Moreover, since $X'$ is normal, there is an open subscheme $U\subseteq X'$ with $\mathrm{codim}_{X'}(X'\setminus U)\ge 2$,
and in fact, $\mathcal{O}_X(A)\cdot \mathcal{O}_{X'}|_U$ is globally generated, and thus its double dual, $\mathcal{O}_{X'}(-f^{\natural}(-A))$ is globally generated in codimension $1$.
\end{proof}

\begin{remark} \label{nothope}
In general, $\mathcal{O}_{X'}(-f^{\natural}(-A))$ is not globally generated. For example, let $X:=(xy=zw)\subseteq \A^4$, and let $A:=(x=z=0)\subseteq X$. Then the blowup of $X$ along $\mathcal{O}_X(-A)$ gives us a small resolution $f:X'\to X$, and $\mathcal{O}_{X'}(-f^{\natural}(-A))|_E\cong \mathcal{O}_{\P^1}(-1)$ is not globally generated.

\smallskip

Moreover, if we replace $\mathcal{O}_{X'}(-f^{\natural}(-A))$ with $\mathcal{O}_{X'}(f^{\natural}A)$, then in general, Lemma \ref{pullbackpreservessurjection} does not hold. Indeed, let $B:=(x=w=0)\subseteq X$, let $X''$ be the blowup of $X$ along $\mathcal{O}_X(-B)$, and let $X'\dashrightarrow X''$ be the Atiyah flop. Let $g:X'''\to X$ be the blowup of $X$ at the origin, and let $E$ be the exceptional divisor on $X'''$. Define by $H\subseteq X'$ a general hyperplane section, and $D$ the strict transform of $H$ to $X$. Then, as computed in \cite[Example 2.12]{dFH09}, we obtain $\mathrm{ord}^{\natural}_E(D)>0$, whereas $\mathrm{ord}^{\natural}_E(-D)=0$. Therefore, $\mathcal{O}_{X'''}(g^{\natural}D)$ is not globally generated in codimension $1$.
\end{remark}

We can think that the following lemma is a kind of tie-breaking argument for Weil divisors.

\begin{lemma} \label{zzm}
Let $X$ be an affine normal variety, $f:X'\to X$ a proper birational morphism, $E$ a prime divisor on $X$, and $A$ a Weil divisor on $X$. Then for a general effective divisor $D\in H^0(X,\mathcal{O}_X(A))$, we obtain $\mathrm{ord}^{\natural}_E(-D)=0$.
\end{lemma}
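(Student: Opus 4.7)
The plan is to reduce the statement to finding a section of $\mathcal{O}_X(A)$ that attains the minimum value $m:=\mathrm{ord}_E(\mathcal{O}_X(A))$ along $E$, and then to conclude by identifying $\mathcal{O}_X(D)$ with a translate of $\mathcal{O}_X(A)$ inside $k(X)$.

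Since $X$ is affine and $\mathcal{O}_X(A)$ is coherent, $\mathcal{O}_X(A)$ is globally generated; in particular the stalk $\mathcal{O}_X(A)_{\eta_E}$, which is a free rank one module over the DVR $\mathcal{O}_{X,\eta_E}$, is generated by global sections. This produces $\phi_0\in H^0(X,\mathcal{O}_X(A))$ with $\mathrm{ord}_E(\phi_0)=m$, and by construction $m$ is the minimum of $\mathrm{ord}_E$ over all of $H^0(X,\mathcal{O}_X(A))$. The subset
\[
V:=\bigl\{\phi\in H^0(X,\mathcal{O}_X(A)) : \mathrm{ord}_E(\phi)>m\bigr\}
\]
is a proper linear subspace (linear by the non-Archimedean inequality $\mathrm{ord}_E(\phi+\phi')\ge \min\{\mathrm{ord}_E(\phi),\mathrm{ord}_E(\phi')\}$ together with $\mathrm{ord}_E(c\phi)=\mathrm{ord}_E(\phi)$ for $c\in k^{\times}$, and proper by the existence of $\phi_0$), so a general $\phi$ satisfies $\mathrm{ord}_E(\phi)=m$.

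For such a general $\phi$, set $D:=\mathrm{div}(\phi)+A$ and view $\mathcal{O}_X(A)$ and $\mathcal{O}_X(D)$ as fractional ideals inside $k(X)$. The equality $\mathcal{O}_X(D)=\phi^{-1}\cdot\mathcal{O}_X(A)$ is immediate from the definition of the sheaves associated with Weil divisors, whence
\[
\mathrm{ord}_E^{\natural}(-D)=\mathrm{ord}_E(\mathcal{O}_X(D))=-\mathrm{ord}_E(\phi)+\mathrm{ord}_E(\mathcal{O}_X(A))=-m+m=0.
\]
The morphism $f$ plays no role in the argument (and the same proof works with no change if $E$ is instead a prime divisor on $X'$, since only the divisorial valuation $\mathrm{ord}_E$ on $k(X)$ matters). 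I do not anticipate any serious obstacle; the only careful point is that global generation of $\mathcal{O}_X(A)$ on the affine $X$ descends to generation of the stalk at $\eta_E$, which is automatic.
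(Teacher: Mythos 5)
Your proof is correct, but it takes a more direct route than the paper's. The paper transports the problem to $X'$: it first shows (Lemma \ref{pullbackpreservessurjection}) that $\mathcal{O}_{X'}(-f^{\natural}(-A))$ is globally generated in codimension $1$, picks a general section $D'$ of that sheaf with $\mathrm{ord}^{\natural}_E(-D')=0$, and then uses the isomorphism $H^0(X,\mathcal{O}_X(A))\cong H^0(X',\mathcal{O}_{X'}(-f^{\natural}(-A)))$ from Lemma \ref{q-birational}, together with Remark \ref{natural} (c), to push the conclusion down to $D=f_*D'$. You instead stay on $X$ and argue that a general global section $\phi$ of the fractional ideal $\mathcal{O}_X(A)$ attains the minimal value of $\mathrm{ord}_E$, the sections of strictly larger order forming a proper linear subspace; the identity $\mathcal{O}_X(D)=\phi^{-1}\mathcal{O}_X(A)$ then gives $\mathrm{ord}^{\natural}_E(-D)=0$ at once. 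This bypasses Lemmas \ref{q-birational} and \ref{pullbackpreservessurjection} entirely and is arguably cleaner. One caveat: as the lemma is actually applied (in Lemma \ref{strong1}), $E$ is an $f$-exceptional prime divisor on $X'$ — the phrase ``on $X$'' in the statement is a slip — so the center $c_X(\mathrm{ord}_E)$ may have codimension $\ge 2$ in $X$, and the stalk of $\mathcal{O}_X(A)$ there need not be free over a DVR. Your parenthetical claim that the argument survives is right, but the justification should be adjusted: since $X$ is affine, finitely many global sections $s_1,\dots,s_r$ generate $\mathcal{O}_X(A)_{c_X(\mathrm{ord}_E)}$, and $\mathrm{ord}_E\bigl(\sum_i g_i s_i\bigr)\ge \min_i \mathrm{ord}_E(s_i)$ because $\mathrm{ord}_E\ge 0$ on $\mathcal{O}_{X,c_X(\mathrm{ord}_E)}$; hence the minimum $m$ of $\mathrm{ord}_E$ on that stalk is attained by a global section, and the rest of your argument goes through verbatim.
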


\begin{proof}
Since $\mathcal{O}_{X'}(-f^{\natural}(-A))$ is globally generated in codimension $1$ by Lemma \ref{pullbackpreservessurjection}, we obtain that for a general effective divisor $D'\in H^0(X',\mathcal{O}_{X'}(-f^{\natural}(-A)))$ such that 
$$\mathrm{ord}^{\natural}_{E}(-D')=0.$$
We define $D:=f_*D'$.

\smallskip

Let us assert that $D'=-f^{\natural}(-D)$. In fact, the map that comes from Remark \ref{natural} (a)
$$ H^0(X,\mathcal{O}_X(A))\to  H^0(X',\mathcal{O}_{X'}(-f^{\natural}(-A)))$$
is an isomorphism by (a) of Lemma \ref{q-birational}, the map is $D''\mapsto -f^{\natural}(-D'')$, and the inverse is $D'''\mapsto f_*D'''$ for any $D''\in H^0(X,\mathcal{O}_X(A))$ and any $D'''\in H^0(X',\mathcal{O}_{X'}(-f^{\natural}(-A)))$ respectively. Thus, the claim is established.

\smallskip

Consequently, referring to Remark \ref{natural} (c), we have 
$$ \mathrm{ord}_{E}(-D)=0,$$
and this is the desired result.
\end{proof}

\begin{remark}
By the reason of Remark \ref{nothope}, we only get $\ord^{\natural}_E(D)\ge 0$, and we cannot hope $\ord^{\natural}_E(D)=0$.
\end{remark}

Let us prove the following version of the Negativity lemma. Roughly, the lemma is a combination of \cite[Proposition 4.1]{BLX22} and \cite[Lemma 14]{dFDTT15}.

\begin{lemma} \label{듸듸<3<3}
Let $f:X'\to X$ be a resolution of a normal variety $X$ over $\C$, $E:=\sum^r_{i=1} E_i$ an snc divisor on $X'$, and assume that the exceptional locus of $f$ has pure codimension $1$. Then there is a finitely generated $\Z$-algebra $R$, models $f_R:X'_R\to X_R$ and $(X'_R,E_R)$ of $f$, $(X',E)$ respectively such that for each $s\in \Spec R$,
\begin{itemize}
    \item[\emph{(a)}] for the composition of every sequence of toroidal blowups $f':X''\to X'$ of $(X',E)$, there are curves $C_{1,X''},\cdots,C_{n_{X''},X''}$ in $X''$ movable in codimension $1$ and the flat $R_s$-schemes $C_{1,X''_{R_s}},\cdots,C_{n_{X''},X''_{R_s}}$
    with
    $$ C_{j,X''_{R_s}}\times_{R_s} \C=C_{j,X''}$$
    such that the reductions $C_{1,X''_{\overline{s}}},\cdots,C_{n_{X''},X''_{\overline{s}}}$ are movable curves in codimension $1$ on $X''_{\overline{s}}$.
    \item[\emph{(b)}] for every divisor $D$ on $X''$, if $D\cdot C_{j,X''}\ge 0$ for all $j$ and $(f\circ f')_*D\le 0$, then $D\le 0$.
\end{itemize}
Here, $R_s$ is the localization of the ring $R$ at the maximal ideal of $R$ corresponding to the closed point $s\in \Spec R$.
\end{lemma}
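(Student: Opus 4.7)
The plan is to proceed in three steps: spreading the ambient data out over $\Z$, constructing movable curves uniformly across all toroidal blowups and closed points, and then running a standard negativity argument on the resulting intersection matrix.

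\smallskip

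\emph{Step 1 (spreading out).} First I would choose a finitely generated $\Z$-algebra $R$ over which the quadruple $(X,X',E,f)$ admits a flat model $(X_R,X'_R,E_R,f_R)$ with $E_R$ snc and $f_R$ an isomorphism away from $E_R$. Every composition of toroidal blowups $f':X''\to X'$ is obtained by iteratively blowing up scheme-theoretic intersections of components of $E$, all of which are smooth by snc-ness; applying the same combinatorial sequence to $(X'_R,E_R)$ therefore yields a flat model $f'_R:X''_R\to X'_R$ with the same exceptional behavior. Writing $\pi:=f\circ f'$ and $\pi_R:=f_R\circ f'_R$, both have exceptional locus of pure codimension one, and the same holds for every geometric reduction $\pi_{\overline{s}}$.

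\smallskip

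\emph{Step 2 (constructing the movable curves, part (a)).} Fix a closed point $s$ and a toroidal blowup $X''$. For each prime $\pi$-exceptional divisor $G\subseteq X''$, the image $\pi(G)$ has codimension $\ge 2$ in $X$, so $\pi|_G$ has positive-dimensional fibers. After compactifying $X''_{R_s}$ to a projective $R_s$-scheme (enlarging $R$ once if needed), I would fix a very ample divisor $H_{R_s}$ and let $C_{G,X''_{R_s}}$ be the intersection of $G_{R_s}$ with $\dim G-1$ general members of $|H_{R_s}|$, cut so as to lie in a single fiber of $\pi_{R_s}|_{G_{R_s}}$. Bertini applied over both the generic fiber and over $\overline{s}$ (valid for $s$ in a dense open of $\Spec R$) produces flat irreducible $R_s$-curves whose characteristic-zero fiber $C_{G,X''}$ and reduction $C_{G,\overline{s}}$ both sweep out $G$ and $G_{\overline{s}}$ as the data vary, and hence are movable in codimension one.

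\smallskip

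\emph{Step 3 (negativity, part (b)).} Given $D$ satisfying the hypotheses, I would decompose $D=P+N$ with $P$ supported on non-exceptional primes and $N$ on $\pi$-exceptional primes. Since $\pi$ restricted to any non-exceptional prime is birational onto its image and the images are distinct, the hypothesis $(f\circ f')_*D\le 0$ forces $P\le 0$. For each exceptional $E_i$, the curve $C_{E_i}\subseteq E_i$ is $\pi$-contracted, and no component of the effective divisor $-P$ contains it, so $P\cdot C_{E_i}\le 0$, giving $N\cdot C_{E_i}\ge 0$. Grauert-type negativity gives $E_i\cdot C_{E_i}<0$, while $E_j\cdot C_{E_i}\ge 0$ for $j\neq i$; the resulting intersection matrix of the exceptional configuration is of $M$-matrix type, and the standard negativity lemma for exceptional divisors (as in \cite[Lemma 14]{dFDTT15}) forces $N\le 0$, concluding $D\le 0$.

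\smallskip

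The main obstacle is in Step 2: a single $R$ must simultaneously serve the infinitely many toroidal blowups $X''$ and every closed point $s\in \Spec R$. This is resolved by observing that toroidal blowups of $(X',E)$ are parametrized by combinatorial data on $E$ and commute with base change, so no further spreading out is needed for them; the very ample divisors required for the curve construction descend from a fixed compactification of $X_R$ after a single enlargement of $R$; and Bertini genericity may be invoked $(s,X'')$-by-$(s,X'')$ rather than uniformly, which is sufficient because the assertion of the lemma is itself pointwise in $(s,X'')$.
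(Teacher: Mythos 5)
Your overall template (spread out, cut curves by hyperplanes, conclude by negativity) is the right one and is indeed what the paper does, following \cite[Lemma 14]{dFDTT15}. But both the curve construction and the negativity step have genuine gaps. In Step 2 you propose to cut $C_{G}$ out of an exceptional prime $G$ by $\dim G-1$ general members of a very ample system on a compactification of $X''$, ``cut so as to lie in a single fiber of $\pi|_G$.'' These two requirements are incompatible: a general complete intersection of ample divisors inside $G$ never lies in a fiber of $\pi|_G$ unless $\pi(G)$ is a point. The construction has to mix two kinds of hyperplanes, stratified by $e:=\operatorname{codim}_X\pi(G)$: one cuts by $n-e$ pullbacks of \emph{very general} hyperplanes $L_{e+1},\dots,L_n$ from $X$ (these are the ones that confine the curve to fibers, and they must be chosen once over $\C$, generically for all countably many toroidal blowups, before fixing $R$) and by $e-2$ ample divisors $H_3,\dots,H_e$ on $X''_{R_s}$. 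You also do not address why the resulting $R_s$-scheme is flat (so that its reduction mod $p$ is still a curve); the paper needs a Cohen--Macaulayness/miracle-flatness argument for this, and a Hilbert-polynomial/flat-family argument to see that the reductions remain movable in codimension $1$.

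Step 3 is where the proposal would actually fail. The inequality $E_i\cdot C_{E_i}<0$ is not a consequence of ``Grauert-type negativity'' in dimension $\ge 3$, and the sign pattern $E_j\cdot C_{E_i}\ge 0$ for $j\ne i$ breaks down precisely when $C_{E_i}$ is forced to lie inside another exceptional divisor $E_j$ (e.g.\ when $\pi(E_j)\supsetneq\pi(E_i)$), which your one-curve-per-divisor setup does not exclude. Even granting the sign pattern, the ``M-matrix'' deduction from $N\cdot C_{E_i}\ge 0$ to $N\le 0$ is unproved, and citing the negativity lemma of \cite{dFDTT15} at this point is circular, since that statement (in the form needed here) is exactly what is being established. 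The correct argument decomposes $D=D_1+\cdots+D_n$ according to the codimension of the images of the components, inducts on the stratum $e$, verifies the three intersection facts ($E\cdot C_{e,j}=0$ if $\operatorname{codim}\pi(E)>e$; the components of $D_e|_{T_e}$ are exactly curves $C_{e,j}$ if $\operatorname{codim}\pi(E)=e$; $E\cdot C_{e,j}\ge 1$ if $\operatorname{codim}\pi(E)<e$), and reduces each stratum to the negativity lemma on the complete-intersection \emph{surface} $T_e$. None of this is visible in your Step 3, so as written the proof does not go through.
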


\begin{proof}
Since the lemma is local, we may assume $X$ is quasi-projective. Let $n\ge 3$ be the dimension of $X$.

\smallskip

Let us construct $C_{j,X''}$ and $C_{j,X''_{R_s}}$. There are models $f_R:X'_R\to X_R$ and $(X'_R,E_R)$ of $f$ and $(X',R)$ over a finitely generated $\Z$-algebra $R$ respectively. We may assume $R$ to be regular. Since $f':X''\to X$ is the composition of a sequence of toroidal blowups, we obtain a model $f'_R:X''_R\to X'_R$ of $f'$. Let $L_3,\cdots,L_n$ be very general hyperplane sections in $X$. Then we may assume that 
$$\left(f^{-1}(L_3\cap \cdots \cap L_n),E|_{f^{-1}(L_3\cap \cdots \cap L_n)}\right)$$
is an snc pair, and therefore after further shrinking $R$, we may assume that there are models $L_{3R},\cdots,L_{nR}$ of $L_3,\cdots,L_n$ over $R$ such that 
$$Y'_e:=(f_R\circ f'_R)^{-1}(L_{(e+1)R}\cap \cdots \cap L_{nR})$$
is a regular scheme for each $e\ge 2$ (this is possible since 
\begin{equation} \label{snc}
Y_e:=\left(f^{-1}_R(L_{(e+1)R}\cap \cdots \cap L_{nR}),E|_{f^{-1}_R(L_{(e+1)R}\cap \cdots\cap L_{nR})}\right)\text{ is an snc pair},\end{equation}
and $Y'_e$ is obtained by the composition of a sequence of toroidal blowups of $Y_e$). Let $H_{3R_s},\cdots,H_{nR_s}$ be hyperplane sections in $X''_{R_s}:=X''_R\times_R R_s$ such that 
$$T_{eR_s}:=(H_{3R_s}\cap\cdots\cap H_{eR_s})\cap Y'_e$$
is a regular scheme of dimension $\dim R_s+2$ for every $3\le e\le n$ (cf. \cite[Theorem 2.17]{BMP+20}).

\smallskip

Set
$$ T_e:=T_{eR_s}\times_{R_s} \C,$$
and set $S_{e{R_s}}$ to be the normalization of $(f_{R_s}\circ f'_{R_s})(T_{eR_s})$. Let $C_{e,1,X''_{R_s}},\cdots,C_{e,m_e,X''_{R_s}}$ be the prime exceptional loci of $T_{eR_s}\to S_{eR_s}$ with 
$$C_{e,j,X''}:=C_{e,j,X''_{R_s}}\times_{R_s} \C$$ 
a curve on $X''$, and let 
$$C_{e,1,X''_{\overline{s}}}:=C_{e,1,X''_{R_s}}\times_{R_s} \overline{s},\cdots,C_{e,m_e,X''_{\overline{s}}}=C_{e,m_e,X''_{R_s}}\times_{R_s} \overline{s}.$$
Note that because of \cite[Lemma 02FZ]{Stacks} and the construction of $C_{e,j,X''_{R_s}}$, $\dim C_{e,j,X''_{\overline{s}}}\ge 1$. Moreover, $\dim C_{e,j,X''_{\overline{s}}}\le 1$ since $C_{e,j,X''_{\overline{s}}}$ is properly contained in one of the irreducible components of the surface $T_{e\overline{s}}$. Hence, we obtain that $C_{e,j,X''_{\overline{s}}}$ are irreducible curves for all $e,j$. Furthermore, there is an exact sequence
$$ 0\to \mathcal{O}_{T_{eR_s}}(-C_{e,j,X''_{R_s}})\to \mathcal{O}_{T_{eR_s}}\to \mathcal{O}_{C_{e,j,X''_{R_s}}}\to 0.$$
Note that $T_{eR_s}$ is a regular scheme, and thus $\mathcal{O}_{T_{eR_s}}(-C_{e,j,X''_{R_s}})$ is a locally free sheaf on $\mathcal{O}_{T_{eR_s}}$. Taking local cohomology gives us that for any (not necessarily closed) point $x\in X''_{R_s}$,
$$ H^i_x\left(X,\mathcal{O}_{C_{e,j,X''_{R_s}}}\right)=0\text{ for any }0\le i<\dim R_s+1-\dim \overline{\{x\}}.$$
Therefore, $C_{e,j,X''_{R_s}}$ is a Cohen-Macaulay scheme. Note that $R_s$ is a regular local ring, $\dim C_{e,j,X''_{R_s}}=\dim R_s+1$ because $C_{e,j,X''_{R_s}}\subseteq T_{eR_s}$ is a prime exceptional divisor and $\dim T_{eR_s}=\dim R_s+2$. Hence, by the miracle flatness (cf. \cite[Lemma 00R4]{Stacks}), $C_{e,j,X''_{R_s}}$ is a flat $R_s$-scheme. We constructed $C_{j,X''}$ and $C_{j,X''_{R_s}}$ as we want.

\smallskip

Let us prove (a). We may assume $X$ to be projective after compactification. Define $H_{e\overline{s}}:=H_{eR_s}|_{\overline{s}}, L_{e\overline{s}}:=L_{eR}|_{\overline{s}}$, and
$$ U:=|H_{3\overline{s}}|\times \cdots\times |H_{e\overline{s}}|\times |L_{(e+1)\overline{s}}|\times \cdots \times |L_{n\overline{s}}|.$$
Then any general $u=(H'_3,\cdots,H'_e,L'_{e+1},\cdots,L'_n)\in U$ determines a surface $T_{e,u,\overline{s}}$. Let $E$ be a prime $(f_{\overline{s}}\circ f'_{\overline{s}})$-exceptional divisor on $X''_{\overline{s}}$ such that $(f_{\overline{s}}\circ f'_{\overline{s}})(E)$ has codimension $e$ in $X_{\overline{s}}$, and let us define
$$ B(u):=(f_{\overline{s}}\circ f'_{\overline{s}})(E)\cap L'_{e+1}\cap \cdots \cap L'_n.$$
Then $B(u)$ is a finite disjoint union of closed points on $X_{\overline{s}}$,
$$ T_{e,u,\overline{s}}\cap E=H'_3\cap \cdots \cap H'_e\cap ((f_{\overline{s}}\circ f'_{\overline{s}})|_E)^{-1}(B(u)),$$
and thus $T_{e,u,\overline{s}}\cap E$ is a finite disjoint union of curves that are $(f_{\overline{s}}\circ f'_{\overline{s}})$-exceptional for a general $u\in U$. Moreover, for every $e,j$, $C_{e,j,\overline{s}}\subseteq E$ for some prime $(f_{\overline{s}}\circ f'_{\overline{s}})$-exceptional divisor $E$ on $X''_{\overline{s}}$. 

\smallskip

Since $T_{e\overline{s}}\cap E$ is the intersection of 
$$E,H_{3\overline{s}},\cdots,H_{e\overline{s}},(f_{\overline{s}}\circ f'_{\overline{s}})^{-1}_*L_{(e+1)\overline{s}},\cdots,(f_{\overline{s}}\circ f'_{\overline{s}})^{-1}_*L_{n\overline{s}},$$
and since
$$ E+H_{3\overline{s}}+\cdots+H_{e\overline{s}}+(f_{\overline{s}}\circ f'_{\overline{s}})^{-1}_*L_{(e+1)\overline{s}}+\cdots+(f_{\overline{s}}\circ f'_{\overline{s}})^{-1}_*L_{n\overline{s}}$$
is snc by (\ref{snc}), we obtain that the Hilbert polynomial of $T_{e\overline{s}}\cap E$ is the same as the Hilbert polynomial of $T_{e,u,\overline{s}}$ for a general $u\in U$ (if we pick a very ample line bundle $\mathcal{O}(1)$ on $X''_{\overline{s}}$). In fact, if we take the Koszul complex of $\mathcal{O}_{T_{e,u,\overline{s}}\cap E}$, then we have that the Hilbert polynomial is
$$ p(m)=\sum_{J\subseteq \{1,\cdots,n-1\}}(-1)^{|J|}\chi\left(X''_{\overline{s}},\mathcal{O}(m)\left(-\sum_{j\in J}D_j\right)\right),$$
where $D_1=E, D_2=H_{3\overline{s}},\cdots,D_{e-1}=H_{e\overline{s}},D_e=L_{(e+1)\overline{s}},\cdots,D_{n-1}=L_{n\overline{s}}$.

\smallskip

Therefore there is a flat family of $1$-dimensional schemes $W\to U'$ and $0\in U'$ with $W_0=T_{e\overline{s}}\cap E$ and $W_u=T_{e,u,\overline{s}}\cap E$, where $U'\subseteq U$ is a non-empty open subscheme (cf. \cite[Theorem 3.9.9]{Har77}). If we consider the irreducible components of the flat family, then we obtain that $C_{e,j,X''_{\overline{s}}}$ are movable curves in codimension $1$ on $X''_{\overline{s}}$. By the same argument, $C_{e,j,X''}$ is movable in codimension $1$ inside $X''$ for every $e,j$.

\smallskip

Let us prove (b). Note that the proof is almost the same as \cite[Proof of Lemma 14]{dFDTT15}. Suppose $E$ is a prime exceptional divisor on $X''$, and let
$$ B:=(f\circ f')(E)\cap L_{e+1}\cap \cdots\cap L_n.$$
Note that
$$ T_e\cap E=H_3\cap \cdots\cap H_e\cap ((f\circ f')|_E)^{-1}(B).$$
Let $c$ be the codimension of $(f\circ f')(E)$.

\begin{itemize}
    \item[(i)] If $c>e$, then $B$ is empty and $E$ is disjoint from $T_e$. In particular, $E\cdot C_{e,j}=0$ for all $j$.
    \item[(ii)] If $c=e$, then $B$ is zero-dimensional, and $(f\circ f')^{-1}(B)$ is a union of general fibers of $(f\circ f')|_E$. In particular, if $T_e\cap E$ is nonempty, then any irreducible component of $T_e\cap E$ is $C_{e,j,X''}$ for some $j$.
    \item[(iii)] If $c<e$, then $B$ is an irreducible set of positive dimension, and $((f\circ f')|_E)^{-1}(B)$ is irreducible as well. Then $T_e\cap E$ is an irreducible curve and is not $(f\circ f')$-exceptional. In particular, $E\cdot C_{e,j,X''}\ge 1$ for all $j$.
\end{itemize}

Let $D$ be a Cartier divisor on $X''$ such that $(f\circ f')_*D\le 0$, and $D\cdot C_{e,j,X''}\ge 0$ for all $e,j$. Write
$$ D=D_1+\cdots+D_n,$$
where each $D_e$ is supported exactly on the prime components of $D$ whose image in $X$ has codimension $e$. We will show $D_e\le 0$ for every $e$ by induction. If $e=1$, then the claim is trivial.

\smallskip

Assume $e>1$. By the construction of $H_{3R},\cdots,H_{nR}$ and $L_3,\cdots,L_n$, we obtain that $T_e$ intersects properly with every component of $D_e$, and if $E$ is an prime component of $D_e$, then $T_e\cap E$ is a general complete intersection in $((f\circ f')|_E)^{-1}(B)$. If $E'$ is a different prime component of $D_e$, and $B':=\pi(E')\cap L_{e+1}\cap \cdots \cap L_n$, then $((f\circ f')|_E)^{-1}(B)$ and $((f\circ f')|_{E'})^{-1}(B')$ do not have any components in common, since the hyperplane sections $L_{e+1},\cdots,L_n$ are very general (recall the fact that the number of sequences of toroidal blowups are countable).

\smallskip

Thus, $E|_{T_e}$ and $E'|_{T_e}$ share no common component. Therefore, it is enough to show $D_e|_{T_e}\le 0$. Note that by (ii), we see that the prime components of $D_e|_{T_e}$ are of the form $C_{e,j,X''}$ for some $j$. The use of the Negativity lemma for surfaces shows it suffices to show
\begin{equation} \label{>}
D_e|_{T_e}\cdot C_{e,j,X''}\ge 0\text{ for all }j.
\end{equation}
Since, by (i), $D_{e'}\cdot C_{e,j,X''}=0$ whenever $e'>0$, and
$$ D\cdot C_{e,j,X''}=(D_1+\cdots+D_e)\cdot C_{e,j,X''}=(D_1+\cdots+D_{e-1})\cdot C_{e,j,X''}+D_e\cdot C_{e,j,X''} $$
By induction, $D_1,\cdots,D_{e-1}\le 0$, and by (iii), we know that $C_{e,j,X''}$ is not contained in the support of $D_2+\cdots+D_{e-1}$. Moreover, $C_{e,j,X''}$ is movable in codimension $1$ and $(f\circ f')$-exceptional, the locus spanned by its deformations is $(f\circ f')$-exceptional. Hence, the intersection of $C_{e,j,X''}$ with each of the components of $D_1$ is non-negative. Therefore,
$$ (D_1+\cdots+D_{e-1})\cdot C_{e,j,X''}\le 0.$$
By the hypothesis in the lemma, $D\cdot C_{e,j,X''}\ge 0$, so we must have
$$ D_e|_{T_e}\cdot C_{e,j,X''}=D_e\cdot C_{e,j,X''}\ge 0,$$
and it is (\ref{>}).

\end{proof}

Let us define the following:

\begin{definition} \label{numerically}
Let $X$ be a normal variety over $\C$, and let $D$ be a $\Q$-Weil divisor on $X$.
\begin{itemize}

\item[(a)] Let us say $D$ is an \emph{asymptotically flat divisor} on $X$ if the following is satisfied: Suppose $\left(X',E:=\sum^r_{i=1}E_i\right)$ is a log smooth model over $X$, and $\eta$ is the generic point of a connected component of the intersection of $k$ prime components of $E$. Then there is a finitely generated $\Z$-algebra $R$, models $X_R$, $D_R$, and $(X'_R,E_R)$ of $X,D,(X',E)$ respectively and a Zariski-dense subset $S'\subseteq \Spec R$ such that for any closed point $s\in S'$ and any effective $\Q$-Weil divisor $\Delta'$ on $X_{\overline{s}}$ with $D_{\overline{s}}+\Delta'$ $\Q$-Cartier, 
$$ \nu_{(X',E),\eta,\alpha}(D)\le \nu_{(X'_{\overline{s}},E_{\overline{s}}),\eta_{\overline{s}},\alpha}(D_{\overline{s}}+\Delta') \text{ for all }\alpha\in \R^k_{\ge 0}.$$

    \item[(b)] We say $D$ is \emph{numerically $\Q$-Cartier} if there is a proper birational morphism $f:X'\to X$ with a $\Q$-Cartier divisor $D^{\mathrm{num}}$ on $X'$ such that $f_*D^{\mathrm{num}}=D$ and $D^{\mathrm{num}}$ is $f$-numerically trivial.
    \item[(c)] We say \emph{infinitely many reduction mod $p$ of $D$ is $\Q$-Cartier} if there is a model $X_R$ and $D_R$ of $X$ and $D$ over a finitely generated $\Z$-algebra $R$ respectively, and a Zariski-dense subset $S'\subseteq \Spec R$ such that $mD_{\overline{s}}$ is $(S_3)$ for every positive integer $m$ and every $s\in S'$.
\end{itemize}
\end{definition}

Note that the notion, \emph{numerically $\Q$-Cartier}, is first introduced in \cite{BdFFU15}.

\begin{proof}[Proof of Proposition \ref{GG}]
Let $\left(X',E:=\sum^r_{i=1}E_i\right)$ and $\alpha\in \Q^k_{\ge 0}$ be a log smooth model of $X$ and a tuple respectively. Let $X_R$, $D_R$, and $(X'_R,E_R)$ be models of $X$, $D$ and $(X',E)$ over a finitely generated $\Z$-algebra $R$. Let $f':X''\to X'$ be a toroidal blowup of $(X',E)$ such that the center of $\nu_{(X',E),\eta,\alpha}$ is a prime divisor $F$ on $X''$. Then there is a model $f'_R:X''_R\to X'_R$ of $f'$ and $F_R$ of $F$ over $R$.

\smallskip

We will prove the condition of asymptotic flatness only for $\alpha\in \Q^k_{\ge 0}$: the rest can be proved by Lemma \ref{youareeasy'}.

\smallskip

\noindent \textbf{Proof of (a)}: Let us consider \cite[Lemma 6.2]{CEMS18}. Then we obtain that there is a positive integer $n$ such that the natural morphism
$$ \bigoplus_{m\ge 0}\mathcal{O}_{X_R}(-mnD_R)\bigg|_{X_{\overline{s}}}\to \bigoplus_{m\ge 0}\mathcal{O}_{X_{\overline{s}}}(-mnD_{\overline{s}})$$
is an isomorphism. Considering the inclusions $\mathcal{O}_{X_{\overline{s}}}\subseteq \mathcal{O}_{X''_{\overline{s}},F_{\overline{s}}}\subseteq k(X_{\overline{s}})$ and $\mathcal{O}_{X}\subseteq \mathcal{O}_{X'',F}\subseteq k(X)$, we obtain that a generator of $\mathcal{O}_{X_{\overline{s}}}(mnD_{\overline{s}})\cdot \mathcal{O}_{X''_{\overline{s}},F_{\overline{s}}}$ in a discrete valuation ring $\mathcal{O}_{X''_{\overline{s}},F_{\overline{s}}}$ can be lifted to $\mathcal{O}_{X}(mnD)\cdot \mathcal{O}_{X'',F}$, and therefore by Lemma \ref{detail},
$$ \nu^{\natural}_{(X'_{\overline{s}},E_{\overline{s}}),\alpha}(mnD_{\overline{s}})\ge \nu^{\natural}_{(X',E),\alpha}(mnD),$$
and letting $m\to \infty$ gives us the assertion.

\smallskip

\noindent \textbf{Proof of (b)}: Note that there is a $\Q$-Cartier divisor $D^{\mathrm{num}}$ on $X''$ such that 
$$(f\circ f')_*D^{\mathrm{num}}=D$$
and $D^{\mathrm{num}}$ is $(f\circ f')$-numerically trivial (cf. \cite[Proposition 5.3]{BdFFU15}), and there is a model $D^{\mathrm{num}}_R$ of $D^{\mathrm{num}}$ over $R$.

\smallskip

Let $\Delta'$ be an effective $\Q$-Weil divisor on $X$ with $D_{\overline{s}}+\Delta'$ $\Q$-Cartier. Let us define
$$ F':=-(f_{\overline{s}}\circ f'_{\overline{s}})^*(D_{\overline{s}}+\Delta')+D^{\mathrm{num}}_{\overline{s}}+(f_{\overline{s}}\circ f'_{\overline{s}})^{-1}_*\Delta',$$
and let us use the notations in Lemma \ref{듸듸<3<3}. Since $F'$ is $(f_{\overline{s}}\circ f'_{\overline{s}})$-exceptional, there is an $(f\circ f')$-exceptional divisor $F$ on $X''$ with $F_{\overline{s}}=F'$. Since $C_{j,X''_{\overline{s}}}$ is movable in codimension $1$ and $(f_{\overline{s}}\circ f'_{\overline{s}})$-exceptional, we have 
$$(f_{\overline{s}}\circ f'_{\overline{s}})^{-1}_*\Delta'\cdot C_{j,X''_{\overline{s}}}\ge 0.$$
Hence,
$$F\cdot C_{j,X''}=F'\cdot C_{j,X''_{\overline{s}}}\ge 0,$$
and therefore, by Lemma \ref{듸듸<3<3}, $F\le 0$. Thus, $F'\le 0$, and
$$ \nu_{(X',E),\eta,\alpha}(D)\le \nu_{(X'_{\overline{s}},E_{\overline{s}}),\eta_{\overline{s}},\alpha}(D_{\overline{s}}+\Delta').$$

\smallskip

\noindent \textbf{Proof of (c)}: Note that since $X$ has rational singularities, $D$ is $\Q$-Cartier outside of a codimension $\ge 3$ closed subscheme (cf. \cite[Proposition B.7]{CS23}). Let $S'\subseteq \Spec R$ be a Zariski-dense subset of $\Spec R$ such that $mD_{\overline{s}}$ is $(S_3)$ for every positive integer $m$ and every $s\in S'$, and $Z\subseteq X$ be a codimension $\ge 3$ closed subscheme of $X$ such that $D|_{X\setminus Z}$ is $\Q$-Cartier.

\smallskip

After shrinking $R$, we may assume that there exists a model $Z_R\subseteq X_R$ of $Z\subseteq X$, and we may further assume $D_R|_{X_R\setminus Z_R}$ to be as Cartier after multiplying by some positive integer. Note that
$$ \imath_{\overline{s}*}\left(\mathcal{O}_{X_R}(mD_R)|_{X_{\overline{s}}\setminus Z_{\overline{s}}}\right)=\mathcal{O}_{X_{\overline{s}}}(mD_{\overline{s}})$$
for every $m$ and every $s\in S'$. Hence, by \cite[Theorem 10.73]{Kol23} and the generic flatness (cf. \cite[Proposition 052A]{Stacks}), we obtain that $\mathcal{O}_{X_R}(mD_R)$ is flat over $R$ for every $m$. Considering \cite[Theorem 9.17]{Kol23}, we have a natural isomorphism
$$ \mathcal{O}_{X_R}(mD_R)|_{\overline{s}}\cong \mathcal{O}_{X_{\overline{s}}}(mD_{\overline{s}})$$
for every $m$, and therefore we may lift an element of $\mathcal{O}_{X_{\overline{s}}}(mD_{\overline{s}})\cdot \mathcal{O}_{X''_{\overline{s}},F_{\overline{s}}}$ that generates the range of $\nu_{(X'_{\overline{s}},E_{\overline{s}}),\eta_{\overline{s}},\alpha}$ to $\mathcal{O}_{X}(mD)\cdot \mathcal{O}_{X'',F}$. Hence, by Lemma \ref{detail}, we obtain the assertion.

\end{proof}

\begin{example} \label{exex}
Let me give an example of a Weil divisor $D$ that is not $\Q$-Cartier and $mD$ is $(S_3)$ in infinitely many reductions mod $p$. Let $X$ be a smooth projective variety such that $-K_X$ is ample, and
$$ H^1(X_p,\mathcal{O}_{X_p}(mK_{X_p}))=0$$
for every integer $m$ and every prime $p$, where $X_p$ is the reduction mod $p$ of $X$ (simply put $X:=\P^2$). Let $a,b,m,n$ be integers. Then, by the Künneth formula (cf. \cite[Lemma 0BED]{Stacks}),
\begin{equation} \label{ex1}
H^1\left(X_p\times X_p,\mathcal{O}_{X_p\times X_p}((m+an)K_{X_p},(m+bn)K_{X_p})\right)=0.
\end{equation}
Let us assume $a\ne 1,b$ to be coprime positive integers, and let us define
$$ \mathcal{L}_p:=\mathcal{O}_{X_p\times X_p}(-aK_{X_p},-bK_{X_p}).$$

\smallskip

We may consider the affine cone
$$ S_p:=\Spec \bigoplus_{m\ge 0}H^0(X_p\times X_p,\mathcal{L}^m_p).$$
We note the following:
\begin{itemize}
    \item Because of the fact $\mathrm{Cl}(S_p)=\mathrm{Pic}(X_p\times X_p)/(\mathcal{O}_{X_p\times X_p}(-aK_{X_p},-bK_{X_p}))$ (cf. \cite[Proposition 3.14]{Kol13}), $\mathcal{O}_{X_p\times X_p}(K_{X_p},K_{X_p})$ maps to a non-zero element in $\mathrm{Cl}(S_p)$. Moreover, $\Pic(S_p)=0$ by \cite[Proposition 3.14]{Kol13} again. Hence, $-K_{S_p}$ is not $\Q$-Cartier.
    \item By (\ref{ex1}) and \cite[Remark 3.12]{Kol13}, we know that $-mK_{S_p}$ is $(S_3)$ for any integer $m$.
\end{itemize}

\end{example}

\section{Log discrepancies in sense of de Fernex-Hacon} \label{4444}

In this section, we introduce the notion of \emph{log discrepancy} for a couple $(X,\Delta)$ that is first introduced in \cite[Chapter 7]{dFH09}. We keep the standing notation of Notation \ref{nota}.

\begin{definition} \label{see}
Let $(X,\Delta)$ be a couple, $f:X'\to X$ a proper birational morphism between normal varieties, and $E$ a prime divisor on $X'$. We define
$$ A_{X,\Delta,m}(E):=\mathrm{ord}_E(K_{X'})+1-\frac{1}{m}\mathrm{ord}^{\natural}_E(m(K_X+\Delta)),$$
and
$$ A_{X,\Delta}(E):=\mathrm{ord}_E(K_{X'})+1-\mathrm{ord}_E(K_X+\Delta)=\lim_{m\to \infty}A_{X,\Delta,m}(E).$$
\end{definition}

Note that the notion coincides with the usual log discrepancy function if $(X,\Delta)$ is a pair. A main theorem of \cite{dFH09} is that we have a criterion of being of klt type using the notion of log discrepancy.

\begin{theorem}[{cf. \cite[Theorem 1.2]{dFH09}}] \label{dFH09}
Let $(X,\Delta)$ be a couple over a characteristic $0$ field. Then $(X,\Delta)$ is of klt type if and only if there exists a positive integer $m$ such that $A_{X,\Delta,m}(E)>0$ for every prime divisor $E$ over $X$.
\end{theorem}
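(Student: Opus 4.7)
The plan is to prove the two implications separately, since they differ substantially in difficulty. For the easy $(\Rightarrow)$ direction, I would start with an effective $\Delta'\ge 0$ making $(X,\Delta+\Delta')$ klt and choose $m$ so that $m(K_X+\Delta+\Delta')$ becomes Cartier. Since $m\Delta'\ge 0$, any local section $h\in \mathcal{O}_X(-m(K_X+\Delta+\Delta'))$ satisfies $\mathrm{div}(h)\ge m(K_X+\Delta+\Delta')\ge m(K_X+\Delta)$, producing the inclusion of reflexive sheaves $\mathcal{O}_X(-m(K_X+\Delta+\Delta'))\subseteq \mathcal{O}_X(-m(K_X+\Delta))$. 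Passing to $\mathrm{ord}^{\natural}_E$ reverses the containment, and by Remark \ref{natural0} the right-hand side equals $m\cdot \mathrm{ord}_E(K_X+\Delta+\Delta')$ because $m(K_X+\Delta+\Delta')$ is Cartier; consequently $A_{X,\Delta,m}(E)\ge A_{X,\Delta+\Delta'}(E)>0$ for every prime $E$ over $X$.

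For the hard $(\Leftarrow)$ direction, my plan is to produce $\Delta'$ as $(1/m)$ times the effective Weil divisor cut out by a sufficiently general section of $\mathcal{O}_X(-m(K_X+\Delta))$, and to verify klt-ness on a log resolution that principalizes this sheaf. By replacing $m$ with a large multiple---legitimate because subadditivity of $\mathrm{ord}^{\natural}_E$ (Remark \ref{natural0}) forces $A_{X,\Delta,km}(E)\ge A_{X,\Delta,m}(E)$---I would first arrange the stronger bound $A_{X,\Delta,m}(E)>\tfrac{1}{m}$ on the finitely many divisors of the chosen log resolution. Working Zariski-locally (taking $X$ affine), I would choose $f\colon X'\to X$ a log resolution that \emph{principalizes} $\mathcal{O}_X(-m(K_X+\Delta))$, so that $\mathcal{O}_X(-m(K_X+\Delta))\cdot \mathcal{O}_{X'}$ is invertible on $X'$. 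On such a model $f^{\natural}(m(K_X+\Delta))$ is a genuine Cartier divisor, Remark \ref{natural} (d) gives the functoriality $(f\circ g)^{\natural}=g^{\natural}f^{\natural}$ under any further birational $g\colon X''\to X'$, and Lemma \ref{q-birational} identifies the global sections of $\mathcal{O}_{X'}(-f^{\natural}(m(K_X+\Delta)))$ with those of $\mathcal{O}_X(-m(K_X+\Delta))$.

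I would then take a general section $s$ of this invertible sheaf on $X'$, define $D_m:=\mathrm{div}(s)-m(K_X+\Delta)\ge 0$ and $\Delta':=\tfrac{1}{m}D_m$, so that $m(K_X+\Delta+\Delta')=\mathrm{div}(s)$ is principal and $K_X+\Delta+\Delta'$ is $\Q$-Cartier. The log discrepancy then reads $A_{X,\Delta+\Delta'}(E)=\mathrm{ord}_E(K_Y)+1-\tfrac{1}{m}\mathrm{ord}_E(s)$ on a suitable log resolution $Y$. For $E$ whose center on $X'$ avoids the zero locus of the generic section $s$, functoriality gives $\mathrm{ord}_E(s)=\mathrm{ord}^{\natural}_E(m(K_X+\Delta))$, and the hypothesis yields $A_{X,\Delta+\Delta'}(E)>0$ immediately; for the finitely many $E$ appearing in the zero divisor of $s$ itself, $\mathrm{ord}_E(s)$ exceeds $\mathrm{ord}^{\natural}_E(m(K_X+\Delta))$ by at most $1$ (for sufficiently general $s$), so the strengthened hypothesis $A_{X,\Delta,m}(E)>\tfrac{1}{m}$ precisely absorbs the error. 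Finally I would patch these local constructions into a global $\Delta'$ on $X$, using quasi-projectivity.

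The main obstacle I anticipate is the uniformity problem: the condition $A_{X,\Delta,m}(E)>0$ has to be exploited for \emph{every} prime divisor $E$ over $X$, not merely for those appearing on some fixed model. The principalization trick combined with Remark \ref{natural} (d) is exactly what converts this infinite family of constraints into a finite-dimensional Bertini-type statement on $X'$. A secondary difficulty is the small $\tfrac{1}{m}$ discrepancy loss introduced along the new components of the general section's zero divisor, which must be absorbed by passing to a large multiple of $m$ and exploiting the subadditivity-induced monotonicity; arranging for these two constants to interact correctly is where the real care is needed.
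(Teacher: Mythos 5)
The paper gives no proof of this statement: it is quoted directly from de Fernex--Hacon, so your argument should be measured against \cite[\S5]{dFH09}. Your reconstruction is in fact essentially their proof. The $(\Rightarrow)$ direction is correct as written: the sheaf inclusion $\mathcal{O}_X(-m(K_X+\Delta+\Delta'))\subseteq \mathcal{O}_X(-m(K_X+\Delta))$ gives $\mathrm{ord}^{\natural}_E(m(K_X+\Delta))\le m\,\mathrm{ord}_E(K_X+\Delta+\Delta')$, hence $A_{X,\Delta,m}(E)\ge A_{X,\Delta+\Delta'}(E)>0$. The $(\Leftarrow)$ direction via principalizing $\mathcal{O}_X(-m(K_X+\Delta))$ on a log resolution, taking a general section, and setting $\Delta'=\frac{1}{m}(\mathrm{div}(s)-m(K_X+\Delta))$ is exactly the de Fernex--Hacon mechanism of an $m$-compatible boundary.

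There is, however, one genuine soft spot: the final ``patch these local constructions into a global $\Delta'$'' step. Effectivity and klt-ness of the local boundaries $\Delta'_i$ on an affine cover do not let you glue them into a single effective $\Q$-divisor, and Zariski-locality of ``of klt type'' is not something you may freely invoke here, since its standard proof is essentially the theorem you are proving. The correct move (and the one in \cite{dFH09}) is to avoid localizing altogether: since $X$ is quasi-projective, twist by $l H$ for $H$ ample Cartier and $l\gg 0$ so that $\mathcal{O}_X(-m(K_X+\Delta)+lH)$ is globally generated, take a general effective $G\sim -m(K_X+\Delta)+lH$, and set $\Delta':=\frac{1}{m}G$; then $K_X+\Delta+\Delta'\sim_{\Q}\frac{l}{m}H$ is $\Q$-Cartier and your discrepancy computation on the principalizing log resolution goes through verbatim. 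A second, minor point: your plan to ``first arrange $A_{X,\Delta,m}(E)>\frac{1}{m}$ on the finitely many divisors of the chosen log resolution'' is circular as stated, because that resolution depends on $m$; but the strengthening is in fact only needed on the new components of the general member $Z(s)$, where $\mathrm{ord}_E(K_{X'})=0$ and $\mathrm{ord}^{\natural}_E(m(K_X+\Delta))=0$ give $A_{X,\Delta,m}(E)=1$ automatically, so taking $m\ge 2$ (harmless by the monotonicity $A_{X,\Delta,km}\ge A_{X,\Delta,m}$ you correctly derive from subadditivity) already yields $1-\frac{1}{m}>0$ there, and on the pre-existing snc components a general $s$ contributes nothing. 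With these two adjustments your argument is complete.
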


We can prove the following lemma, which can be viewed as a weak version of our main theorem.

\begin{lemma} \label{weak}
Let $(X,\Delta)$ be an affine couple with $K_X+\Delta$ asymptotically flat that is of strongly $F$-regular type, and let $E$ be a prime divisor over $X$. Then $A_{X,\Delta}(E)>0$.
\end{lemma}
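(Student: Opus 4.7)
The strategy is to exploit the asymptotic flatness hypothesis to transport the klt log discrepancy inequality from a positive-characteristic fiber, where strong $F$-regularity plus the Schwede--Smith theorem supplies a convenient boundary, back up to $X/\C$.

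First, I would fix a log resolution $f\colon X'\to X$ of $(X,\Delta)$ for which $E$ appears as a prime divisor on $X'$. Completing $E$ to an snc divisor $E=\sum_{i=1}^{r}E_i$ containing $\Exc(f)\cup \Supp f^{-1}_{*}\Delta$ (taking $E_1=E$), I get a log smooth model $\bigl(X',\sum E_i\bigr)\to X$ with $\eta$ the generic point of $E_1$, so $k=1$ and the relevant $\alpha$-vector is the scalar $1\in \R_{\ge 0}$, yielding $\nu_{(X',E),\eta,1}=\ord_E$. By Lemma \ref{detail} and by the asymptotic flatness condition applied to $D:=K_X+\Delta$ with this log smooth model, I can enlarge $R$ once so that there exist compatible models $(X_R,\Delta_R)$, $f_R\colon X'_R\to X_R$, $(X'_R,E_R)$ over a finitely generated $\Z$-algebra $R$ together with a Zariski-dense subset $S'\subseteq \Spec R$ realizing the asymptotic flatness inequality at this model.

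Second, because $(X,\Delta)$ is of strongly $F$-regular type, there is a non-empty open subset $U\subseteq \Spec R$ of closed points $s$ such that $(X_{\overline{s}},\Delta_{\overline{s}})$ is a strongly $F$-regular couple. Since $S'$ is Zariski-dense, $U\cap S'$ is non-empty; pick a closed point $s$ in it. By Theorem \ref{SS10}, there is an effective $\Q$-Weil divisor $\Delta'_s$ on $X_{\overline{s}}$ such that $(X_{\overline{s}},\Delta_{\overline{s}}+\Delta'_s)$ is a strongly $F$-regular pair; in particular $K_{X_{\overline{s}}}+\Delta_{\overline{s}}+\Delta'_s$ is $\Q$-Cartier. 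Theorem \ref{real} then gives that $(X_{\overline{s}},\Delta_{\overline{s}}+\Delta'_s)$ is klt, and since $E_{\overline{s}}$ is a prime divisor on $X'_{\overline{s}}$:
\begin{equation*}
A_{X_{\overline{s}},\Delta_{\overline{s}}+\Delta'_s}(E_{\overline{s}})=\ord_{E_{\overline{s}}}(K_{X'_{\overline{s}}})+1-\ord_{E_{\overline{s}}}\bigl(f^{*}_{\overline{s}}(K_{X_{\overline{s}}}+\Delta_{\overline{s}}+\Delta'_s)\bigr)>0.
\end{equation*}

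Third, I apply the asymptotic flatness of $K_X+\Delta$ at the valuation $\ord_E=\nu_{(X',E),\eta,1}$ with this specific $\Delta'_s$, to obtain
\begin{equation*}
\ord_E(K_X+\Delta)\le \ord_{E_{\overline{s}}}(K_{X_{\overline{s}}}+\Delta_{\overline{s}}+\Delta'_s).
\end{equation*}
Fixing the canonical representatives by $f_{*}K_{X'}=K_X$ (and likewise over $\overline{s}$) gives $\ord_E(K_{X'})=\ord_{E_{\overline{s}}}(K_{X'_{\overline{s}}})$, so
\begin{equation*}
A_{X,\Delta}(E)=\ord_E(K_{X'})+1-\ord_E(K_X+\Delta)\ge A_{X_{\overline{s}},\Delta_{\overline{s}}+\Delta'_s}(E_{\overline{s}})>0.
\end{equation*}
The main obstacle is the very one that motivates this paper: the divisor $\Delta'_s$ produced in characteristic $p$ is characteristic-dependent and cannot be lifted, so one cannot directly compare log discrepancies in the two characteristics. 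Asymptotic flatness is precisely the tool that sidesteps this by bounding $\ord_E(K_X+\Delta)$ uniformly by the characteristic-$p$ side for \emph{every} admissible $\Delta'_s$, which is what makes the argument close.
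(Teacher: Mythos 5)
Your proposal is correct and follows essentially the same route as the paper's own proof: reduce mod $p$ at a suitable closed point, use Theorem \ref{SS10} and Theorem \ref{real} to produce a boundary $\Delta'_s$ making $(X_{\overline{s}},\Delta_{\overline{s}}+\Delta'_s)$ klt, and then use asymptotic flatness of $K_X+\Delta$ (at the divisorial valuation $\ord_E$, viewed as a quasi-monomial valuation on a log smooth model) to bound $A_{X,\Delta}(E)$ from below by $A_{X_{\overline{s}},\Delta_{\overline{s}}+\Delta'_s}(E_{\overline{s}})>0$. Your version is in fact slightly more careful than the paper's about choosing $s$ in the intersection of the dense set $S'$ from the asymptotic flatness definition with the open locus where the reduction is strongly $F$-regular.
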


\begin{proof}
Let $f:X'\to X$ be a log resolution of $(X,\Delta)$ with $E$ a prime divisor on $X$, and let $(X_R,\Delta_R)$, $f_R:X'_R\to X_R$ and $E_R$ be models of $(X,\Delta)$, $f$, $E$ respectively over a finitely generated $\Z$-algebra.

\smallskip

Considering the definition of asymptotic flatness, after shrinking $R$, we obtain
$$ \mathrm{ord}_{E}(K_{X}+\Delta)\le \mathrm{ord}_{E_{\overline{s}}}(K_{X_{\overline{s}}}+\Delta_{\overline{s}}+\Delta')$$
for any effective $\Q$-Weil divisor $\Delta'$ on $X_{\overline{s}}$ with $K_{X_{\overline{s}}}+\Delta_{\overline{s}}+\Delta'$ $\Q$-Cartier, and
$$ 
\begin{aligned}
A_{X,\Delta}(E)&=\mathrm{ord}_E(K_{X'})+1-\mathrm{ord}_E(K_X+\Delta)
\\ &\ge \mathrm{ord}_{E_{\overline{s}}}(K_{X'_{\overline{s}}})+1-\mathrm{ord}_{E_{\overline{s}}}(K_{X_{\overline{s}}}+\Delta_{\overline{s}}+\Delta')
\\ &=A_{X_{\overline{s}},\Delta_{\overline{s}}+\Delta'}(E_{\overline{s}}).
\end{aligned}
$$
for all closed points $s\in \Spec R$. Fix a closed point $s\in \Spec R$. By Theorem \ref{SS10}, we can choose $\Delta'$ such that $(X_{\overline{s}},\Delta_{\overline{s}}+\Delta')$ is strongly $F$-regular. We know that $(X_{\overline{s}},\Delta_{\overline{s}}+\Delta')$ is klt, and thus $A_{X_{\overline{s}},\Delta_{\overline{s}}+\Delta'}(E_{\overline{s}})>0$. Hence,
$$ A_{X,\Delta}(E)\ge A_{X_{\overline{s}},\Delta_{\overline{s}}+\Delta'}(E_{\overline{s}})>0,$$
and we proved the assertion.
\end{proof}

\section{Proof of the main theorem} \label{55555}
In this section, we prove the main theorem. As explained in the introduction, we will use the Jonsson-Mustață conjecture, and to make a setting to use that, we will take a small $\Q$-factorialization to a strongly $F$-regular type couple. We keep the standing notation of Notation \ref{nota}.

\begin{definition} \label{Q}
Let $(X,\Delta)$ be a couple, and let $f:X'\to X$ be a small morphism. We say that $f$ is a \emph{small $\Q$-factorializtion} of $(X,\Delta)$ if
\begin{itemize}
    \item[(a)] $X'$ is $\Q$-factorial,
    \item[(b)] $K_{X'}+f^{-1}_*\Delta=f^*(K_X+\Delta)$, and
    \item[(c)] $(X',f^{-1}_*\Delta)$ is klt.
\end{itemize}
\end{definition}

\begin{lemma}[{cf. \cite[Lemma 3.1]{CLZ25}}] \label{strong1}
Let $(X,\Delta)$ be an affine couple over $\C$ with $K_X+\Delta$ asymptotically flat that is of strongly $F$-regular type. Then there exists a small $\Q$-factorialization of $(X,\Delta)$.
\end{lemma}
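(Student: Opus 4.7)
The plan is to construct the small $\Q$-factorialization through a relative MMP on a log resolution, using the positivity of log discrepancies established in Lemma~\ref{weak}.

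First, I would fix a log resolution $g\colon Y\to X$ of $(X,\Delta)$ with $g$-exceptional prime divisors $E_1,\dots,E_k$. By Lemma~\ref{weak}, $A_{X,\Delta}(E_i)>0$ for every $i$. Since $A_{X,\Delta}(E_i)=\lim_m A_{X,\Delta,m}(E_i)$ and only finitely many $E_i$ are involved, I would fix a positive integer $m$ with $m(K_X+\Delta)$ Weil and $A_{X,\Delta,m}(E_i)>0$ for all $i$. A direct computation with the natural pullback, unwinding Definition~\ref{see}, yields the key identity
$$K_Y + g^{-1}_{*}\Delta + \sum_{i}\bigl(1 - A_{X,\Delta,m}(E_i)\bigr) E_i \;=\; \tfrac{1}{m}\,g^{\natural}\!\bigl(m(K_X+\Delta)\bigr).$$
Setting
$$\Delta_Y := g^{-1}_{*}\Delta + \sum_{i}\max\!\bigl\{0,\,1-A_{X,\Delta,m}(E_i)\bigr\} E_i,$$
the pair $(Y,\Delta_Y)$ is klt (using that the coefficients of $\Delta$ are strictly less than $1$, which is forced by strong $F$-regularity on the reductions mod $p$), and $K_Y+\Delta_Y$ differs from $\tfrac{1}{m}g^{\natural}(m(K_X+\Delta))$ by an effective exceptional divisor supported on those $E_i$ with $A_{X,\Delta,m}(E_i)\geq 1$.

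Next, after adding a small ample perturbation if needed, I would run the relative $(K_Y+\Delta_Y)$-MMP over $X$ with scaling. By the relative version of BCHM applied to the klt pair $(Y,\Delta_Y)$ over the quasi-projective base $X$, the MMP terminates in a birational map $\phi\colon Y\dashrightarrow X'$ over $X$; writing $f\colon X'\to X$ for the induced morphism and $\Delta_{X'}:=\phi_{*}\Delta_Y$, the output $(X',\Delta_{X'})$ is $\Q$-factorial klt with $K_{X'}+\Delta_{X'}$ nef over $X$. The central step is to show $f$ is \emph{small}, i.e.\ that every $E_i$ is contracted by the MMP. Since $g_{*}(K_Y+\Delta_Y)=K_X+\Delta$, the pushforward constraint forces the MMP to descend to $X$, and the strict positivity $A_{X,\Delta,m}(E_i)>0$ prevents any $E_i$ from surviving on $X'$; this is verified through a Negativity-lemma argument in the spirit of Lemma~\ref{듸듸<3<3}.

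Finally, I would verify the three conditions of Definition~\ref{Q}. (a) $X'$ is $\Q$-factorial by construction. (b) Since $f$ is small, Remark~\ref{natural}(e) gives $f^{-1}_{*}\Delta=f^{\natural}\Delta$ and $K_{X'}=f^{\natural}K_X$, so $K_{X'}+f^{-1}_{*}\Delta$ is a $\Q$-Cartier divisor coinciding with $f^{*}(K_X+\Delta)$. (c) $(X',f^{-1}_{*}\Delta)$ is klt because $f^{-1}_{*}\Delta\leq \Delta_{X'}$ and $(X',\Delta_{X'})$ is klt from the MMP output. The genuine difficulty lies in the smallness step: although the identity $K_Y+\Delta_Y\sim \tfrac{1}{m}g^{\natural}(m(K_X+\Delta))$ (modulo an effective exceptional correction) suggests the MMP is being run along a pullback-like divisor, $K_X+\Delta$ is only Weil and not $\Q$-Cartier, so one cannot invoke standard relative MMP theorems about pulled-back divisors directly; instead one must work through the natural pullback formalism of Section~\ref{333} and combine the Negativity argument with the positivity $A_{X,\Delta,m}(E_i)>0$.
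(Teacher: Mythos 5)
Your overall skeleton matches the paper's proof (log resolution, Lemma~\ref{weak}, a relative MMP over $X$, then verify Definition~\ref{Q}), and your crepant-pullback identity for $\tfrac{1}{m}g^{\natural}(m(K_X+\Delta))$ is correct. But the smallness step, which you yourself flag as the crux, contains a genuine gap: it is \emph{not} true that $A_{X,\Delta,m}(E_i)>0$ forces the $(K_Y+\Delta_Y)$-MMP to contract $E_i$. With your choice $\Delta_Y=g^{-1}_{*}\Delta+\sum_i\max\{0,1-A_{X,\Delta,m}(E_i)\}E_i$, one has (in the $\Q$-Cartier model case) $K_Y+\Delta_Y=g^{*}(K_X+\Delta)+\sum_i\max\{0,A_{X,\Delta,m}(E_i)-1\}E_i$, so the MMP only contracts divisors appearing in the effective part, i.e.\ those with $A>1$. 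Any divisor with $A_{X,\Delta,m}(E_i)=1$ survives: already for the minimal resolution of an $A_1$-singularity with $\Delta=0$, $K_Y=g^{*}K_X$ is nef over $X$, the MMP does nothing, and $Y\to X$ is not small. This is precisely why the paper perturbs the boundary by $\varepsilon\sum_i E_i$ over the divisors with $A_{X,\Delta}(E_i)\ge 1$, so that \emph{every} exceptional divisor lies in the support of the effective exceptional error term that the MMP must contract. An ``ample perturbation'' does not substitute for this.

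Even granting the $\varepsilon$-fix, your argument is missing the mechanism by which one concludes that the residual exceptional divisor vanishes after the MMP. Since $K_X+\Delta$ is only Weil, the paper must (i) check that pushforward along each MMP step is compatible with the natural pullback of a fixed representative $D\sim m_0(K_X+\Delta)$ (Remark~\ref{natural}(d)); (ii) choose $D$ with $-D$ effective (using affineness) and, via the tie-breaking Lemma~\ref{zzm}, with $\mathrm{ord}^{\natural}_E(D)=0$ for every $f$-exceptional $E$ — without this normalization the negativity lemma only gives $\tfrac{1}{m_0}f^{\natural}D+F\le 0$ and one cannot extract $F=0$; and (iii) apply the general negativity lemma for limits of movable divisors (\cite[Lemma 3.3]{Bir12}), not Lemma~\ref{듸듸<3<3}, which is a reduction mod $p$ statement used elsewhere (in Proposition~\ref{GG}(b)) and is not what is invoked here. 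Your appeal to ``a Negativity-lemma argument in the spirit of Lemma~\ref{듸듸<3<3}'' therefore points at the wrong tool and leaves the decisive steps unproved.
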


\begin{proof}
Note that the lemma can be viewed as a reduction mod $p$ version of \cite[Lemma 3.1]{CLZ25}.

\smallskip

Let $f':X''\to X$ be a log resolution of $(X,\Delta)$, $m_0$ a positive integer, and let
$$\Delta'':=K_{X''}-\frac{1}{m_0}f^*(m_0(K_X+\Delta)).$$
By Lemma \ref{weak}, for sufficiently large $m_0$, we know that all coefficients of $\Delta''$ are $<1$. Let $E_1,\cdots,E_r$ be prime $f'$-exceptional divisors on $X''$ such that $A_{X,\Delta}(E_i)\ge 1$, and $E'_1,\cdots,E'_{r'}$ prime $f'$-exceptional divisors on $X''$ such that $A_{X,\Delta}(E'_{j})<1$ such that $\mathrm{Exc}(f)=\bigcup_i E_i\cup \bigcup_j E'_j$.

\smallskip

Let $m$ be a positive integer, and for each positive integer $m$. Suppose $D$ is a Weil divisor on $X$ such that $D\sim m_0(K_X+\Delta)$ and $-D$ is effective. Such a $D$ exists because $X$ is affine. Let $(\Delta'')^{>0}$ be the positive part of $\Delta''$, and $(\Delta'')^{\le 0}:=\Delta''-(\Delta'')^{>0}.$

\smallskip

There exists $\varepsilon>0$ such that the coefficients of $\Delta'''+\varepsilon\sum_iE_i$ are $<1$, and
$$ K_{X''}+(\Delta'')^{>0}+\varepsilon \sum_iE_i\sim_{\Q} \frac{1}{m_0}f^{\natural}D_{m_0}-(\Delta'')^{\le 0}+\varepsilon \sum_i E_i.$$
Let us run a $\left(K_{X''}+(\Delta'')^{>0}+\varepsilon \sum_i E_i\right)$-MMP over $X$. Suppose $g:X''\dashrightarrow X'''$ is a step of $\left(K_{X''}+(\Delta'')^{>0}+\varepsilon \sum_iE_i\right)$-MMP over $X$, and let $h:X'''\to X$ be the structure morphism. If $g$ is a divisorial contraction, then since $X'''$ is $\Q$-factorial and $(h\circ g)^{\natural}D-g^{\natural}h^{\natural}D$ is effective $g$-exceptional (cf. Remark \ref{natural} (d)), we obtain $g_*(f')^{\natural}D=h^{\natural}D$. If $g$ is a flip, then by the definition of natural pullback, $g_*(f')^{\natural}D=h^{\natural}D$. Thus, in any case, $g_*(f')^{\natural}D=h^{\natural}D$. Note that in any choice of $D$, we can consider only one MMP by Remark \ref{natural} (b).

\smallskip

Thus, we can construct a proper birational morphism $f:X'\to X$ such that $\frac{1}{m_0}f^{\natural}D+F$ is a limit of movable divisors over $X$ by running a $\left(K_{X''}+(\Delta'')^{>0}+\varepsilon \sum_i E_i\right)$-MMP over $X$, where $F$ is the strict transform of $-(\Delta'')^{\le 0}+\varepsilon \sum_iE_i$. For any prime divisor $E'$ on $X'$ that is $f$-exceptional, $\left.\left(\frac{1}{m_0}f^{\natural}D+F\right)\right\vert_{E'}$ is pseudo-effective over $f(E')$, and thus by the general negativity lemma (cf. \cite[Lemma 3.3]{Bir12}), we deduce $-\frac{1}{m_0}f^{\natural}D-F$ is effective.

\smallskip

Note that by Lemma \ref{zzm}, we may assume
$$
\mathrm{ord}^{\natural}_E(D)=0\text{ for every prime }f\text{-exceptional divisor }E.
$$
Under the choice of $D$, with Remark \ref{natural} (c), the condition that $-\frac{1}{m_0}f^{\natural}D-F$ is effective is possible only when $F=0$ because of the fact that $\mathrm{mult}_E\left(-\frac{1}{m_0}f^{\natural}D-F\right)\le 0$ for every prime $f$-exceptional divisor $E$ on $X'$.

\smallskip

Note that $f$ is a small morphism since $\Supp \left(-(\Delta'')^{\le 0}+\varepsilon \sum_i E_i\right)=\mathrm{Exc}(f')$, and $-(\Delta'')^{\le 0}+\varepsilon \sum_i E_i$ is contracted by the MMP. Moreover,
$$ K_{X'}+\Delta'=\frac{1}{m_0}f^{\natural}(m_0(K_X+\Delta)).$$
We have run an MMP from a smooth variety, and thus $X'$ is $\Q$-factorial, and it is (a) in Definition \ref{Q}. Since $f$ is small, by Remark \ref{natural} (e), we know that
$$K_{X'}+\Delta'=\frac{1}{m_0}f^{-1}_*(m_0(K_X+\Delta))=f^*(K_X+\Delta),$$
and it is (b) in Definition \ref{Q}. Furthermore, the strict transform of $(\Delta'')^{>0}+\varepsilon \sum_i E_i$ to $X'$ is $f^{-1}_*\Delta$, thus $(X',f^{-1}_*\Delta)$ is klt, and it is (c) in Definition \ref{Q}. We proved all criteria of Definition \ref{Q}.
\end{proof}

\begin{remark}
We hope our argument using natural valuation can remove the lc complement condition in \cite[Lemma 3.1]{CLZ25} if we allow a generic finite base change on $T$.
\end{remark}

\begin{lemma} \label{BLX}
Let $(X,\Delta)$ be a couple over $\C$, and let $f:X'\to X$ be a small $\Q$-factorialization of $(X,\Delta)$, and define 
$$\Phi:=\{m\in \Z_{\ge 0}\,|\,m(K_X+\Delta)\text{ is Weil and }m(K_{X'}+f^{-1}_*\Delta)\text{ is Cartier}\}$$
and
$$ \mathfrak{a}^{X'}_m:=\mathcal{O}_X(-m(K_X+\Delta))\cdot \mathcal{O}_{X'}(m(K_{X'}+f^{-1}_*\Delta)).$$
Then $\mathfrak{a}^{X'}_{\bullet}:=\{\mathfrak{a}^{X'}_m\}_{m\in \Phi}$ is a graded sequence of ideals in $\mathcal{O}_{X'}$, and $\mathrm{lct}(X',f^{-1}_*\Delta,\mathfrak{a}^{X'}_{\bullet})>1$ if and only if $(X,\Delta)$ is of klt type.
\end{lemma}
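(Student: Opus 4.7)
The plan is to reduce the statement to de Fernex–Hacon's criterion (Theorem \ref{dFH09}) via a direct identity linking the log discrepancies $A_{X,\Delta,m}$ on $X$ to the log canonical thresholds of the ideals $\mathfrak{a}^{X'}_m$ on the klt pair $(X',f^{-1}_*\Delta)$.

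First I would verify that $\mathfrak{a}^{X'}_\bullet$ is a graded sequence of ideals in $\mathcal{O}_{X'}$. Since $f$ is small, Remark \ref{natural} (e) gives $f^{\natural}(m(K_X+\Delta))=m(K_{X'}+f^{-1}_*\Delta)$, so
$$\bigl(\mathcal{O}_X(-m(K_X+\Delta))\cdot \mathcal{O}_{X'}\bigr)^{\vee\vee}=\mathcal{O}_{X'}(-m(K_{X'}+f^{-1}_*\Delta)),$$
and tensoring with the Cartier line bundle $\mathcal{O}_{X'}(m(K_{X'}+f^{-1}_*\Delta))$ exhibits $\mathfrak{a}^{X'}_m$ as an ideal of $\mathcal{O}_{X'}$. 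The graded property $\mathfrak{a}^{X'}_m\cdot \mathfrak{a}^{X'}_n \subseteq \mathfrak{a}^{X'}_{m+n}$ then follows from Remark \ref{natural0}, which yields
$\mathcal{O}_X(-m(K_X+\Delta))\cdot \mathcal{O}_X(-n(K_X+\Delta))\subseteq \mathcal{O}_X(-(m+n)(K_X+\Delta))$,
together with the additivity of the Cartier divisors $m(K_{X'}+f^{-1}_*\Delta)$ in $m$.

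Next, for any prime divisor $E$ over $X$ I would establish the central identity
$$ A_{X,\Delta,m}(E)=A_{X',f^{-1}_*\Delta}(E)-\frac{1}{m}\ord_E(\mathfrak{a}^{X'}_m), \qquad m\in \Phi. $$
This is obtained by computing, using that $m(K_{X'}+f^{-1}_*\Delta)$ is Cartier,
$$\ord_E(\mathfrak{a}^{X'}_m)=\ord^{\natural}_E(m(K_X+\Delta))-m\cdot \ord_E(K_{X'}+f^{-1}_*\Delta),$$
and comparing the result with Definition \ref{see} and the standard log discrepancy formula $A_{X',f^{-1}_*\Delta}(E)=\ord_E(K_Y)+1-\ord_E(K_{X'}+f^{-1}_*\Delta)$, where $Y\to X'$ is a common resolution carrying $E$.

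To conclude, I would observe that the condition $A_{X,\Delta,m}(E)>0$ for every prime divisor $E$ over $X$ is equivalent, via the central identity, to the triple $(X',f^{-1}_*\Delta,(\mathfrak{a}^{X'}_m)^{1/m})$ being klt, hence to $m\cdot \lct(X',f^{-1}_*\Delta,\mathfrak{a}^{X'}_m)>1$. Invoking the standard formula
$$\lct(X',f^{-1}_*\Delta,\mathfrak{a}^{X'}_\bullet)=\sup_{m\in \Phi}\, m\cdot \lct(X',f^{-1}_*\Delta,\mathfrak{a}^{X'}_m)$$
for the log canonical threshold of a graded sequence on a klt pair in characteristic zero, this strict inequality holds for some $m\in \Phi$ if and only if $\lct(X',f^{-1}_*\Delta,\mathfrak{a}^{X'}_\bullet)>1$. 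Combining with Theorem \ref{dFH09} finishes the proof. The main technical point is the central identity, which depends crucially on the smallness of $f$ (so that $f^{\natural}(m(K_X+\Delta))=m(K_{X'}+f^{-1}_*\Delta)$) and on the Cartier property of $m(K_{X'}+f^{-1}_*\Delta)$ for $m\in \Phi$; once this is in place, the remaining arguments are routine applications of the log canonical threshold calculus for graded sequences of ideals.
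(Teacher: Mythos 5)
Your proposal is correct and follows essentially the same route as the paper: both arguments verify the ideal and graded-sequence properties via the smallness of $f$ and the Cartier assumption on $m(K_{X'}+f^{-1}_*\Delta)$, and both rest on the identity $A_{X,\Delta,m}(E)=A_{X',f^{-1}_*\Delta}(E)-\tfrac{1}{m}\operatorname{ord}_E(\mathfrak{a}^{X'}_m)$ combined with $\lct(X',f^{-1}_*\Delta,\mathfrak{a}^{X'}_{\bullet})=\sup_m m\cdot\lct(X',f^{-1}_*\Delta,\mathfrak{a}^{X'}_m)$ and Theorem \ref{dFH09}. The only cosmetic difference is that you make the central identity explicit as a displayed formula, whereas the paper carries out the same computation inline.
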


\begin{proof}
Let us check $\mathfrak{a}^{X'}_{\bullet}:=\{\mathfrak{a}^{X'}_m\}_{m\in \Phi}$ satisfies the properties we want.

\smallskip

\begin{itemize}
    \item[(i)] $\mathfrak{a}^{X'}_m$ is actually an ideal sheaf in $\mathcal{O}_{X'}$.
\end{itemize}

\smallskip

Observe that $f$ is a small morphism, and therefore there exists an open subscheme $U\subseteq X'$ with $\mathrm{codim}_X(X\setminus U)\ge 2$ and $\mathfrak{a}^{X'}_m|_U=\mathcal{O}_{U}$. Hence, $\left(\mathfrak{a}^{X'}_m\right)^{\vee\vee}=\mathcal{O}_{X'}$, and hence the double dual induces the inclusion $\mathfrak{a}^{X'}_m\subseteq \mathcal{O}_{X'}$.

\smallskip

\begin{itemize}
    \item[(ii)] The sequence $\mathfrak{a}^{X'}_{\bullet}:=\{\mathfrak{a}^{X'}_m\}_{m\in \Phi}$ is a graded sequence of ideals in $\mathcal{O}_{X'}$.
\end{itemize}

\smallskip

We observe that
$$
\begin{aligned}
    \mathcal{O}_{X}(-m(K_X+\Delta)) &\cdot \mathcal{O}_{X'}(-n(K_X+\Delta)) 
    \\ &\subseteq \mathcal{O}_{X'}(-m(K_X+\Delta)-n(K_X+\Delta))
    \\ &=\mathcal{O}_{X'}(-(m+n)(K_X+\Delta)).
\end{aligned}
$$
By multiplying both sides by $\mathcal{O}_{X'}((m+n)(K_{X'} + f^{-1}_*\Delta))$, we verify the claim.

\smallskip

\begin{itemize}
    \item[(iii)] $\mathrm{lct}(X',f^{-1}_*\Delta,\mathfrak{a}^{X'}_{\bullet})>1$ if and only if $(X,\Delta)$ is klt.
\end{itemize}

\smallskip

To demonstrate the direct implication, we refer to \cite[Lemma 1.60]{Xu25}, we have that for some $m\in \Phi$, $\mathrm{lct}(X',f^{-1}_*\Delta,\mathfrak{a}^{X'}_m)>1$. Observe that for any prime divisor $E$ on $X'$, we have:
$$ \frac{A_{X',f^{-1}_*\Delta}(E)}{\mathrm{ord}_E\mathfrak{a}^{X'}_m}>\frac{1}{m}.$$
Consequently, for any prime divisor $E$ on $X''$, the following holds:
\begin{equation} \label{very hard}
A_{X',f^{-1}_*\Delta}(E)-\frac{1}{m}\mathrm{ord}^{\natural}_E(m(K_X+\Delta))+\mathrm{ord}_E(K_{X'}+f^{-1}_*\Delta)>0.
\end{equation}
Observe that the expression on the left is given by
$$\mathrm{ord}_E(K_{X''})+1-\frac{1}{m}\mathrm{ord}^{\natural}_E(m(K_X+\Delta))=A_{X,\Delta,m}(E).$$
Thus, (\ref{very hard}) implies that $(X,\Delta)$ is of klt type (cf. Theorem \ref{dFH09}). To prove the converse, simply reverse the steps of the proof.
\end{proof}

\begin{remark}
If $(X,\Delta)$ is a pair, then $\mathfrak{a}^{X'}_m=\mathcal{O}_X$ for any $m\in \Phi$, and $\mathrm{lct}(X',f^{-1}_*\Delta,\mathfrak{a}^{X'}_{\bullet})=\infty$.
 \end{remark}

Let us prove a lemma about Diophantine approximation.

\begin{lemma} \label{Diophantus of Alexandria}
Let $\alpha=(\alpha_1,\cdots,\alpha_r)\in \R^r_{\ge 0}$ be a non-negative real vector, and fix $\varepsilon>0$. Then there exists $\alpha'\in \Q^r_{\ge 0}$ and a positive integer $n$ such that:
\begin{itemize}
    \item $n\alpha \in \Z^r_{\ge 0}$, and
    \item $\|\alpha-\alpha'\|<\frac{\varepsilon}{n}$.
\end{itemize}
\end{lemma}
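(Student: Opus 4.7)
The statement is essentially a packaging of Dirichlet's simultaneous Diophantine approximation theorem, so the plan is to invoke that theorem directly while taking care that the resulting numerators come out non-negative. First I would reduce to the case $0 < \varepsilon < 1$: if $\varepsilon \ge 1$, it suffices to take $n=1$ and let $\alpha'_i := \lfloor \alpha_i \rfloor$, which lies in $\Z_{\ge 0}$ since $\alpha_i \ge 0$, and satisfies $|\alpha_i - \alpha'_i| < 1 \le \varepsilon$ coordinatewise.

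For $0 < \varepsilon < 1$, I would invoke the following standard form of Dirichlet's theorem: for any real numbers $\alpha_1,\dots,\alpha_r$ and any positive integer $N$, there exist an integer $n$ with $1 \le n \le N$ and integers $q_1,\dots,q_r$ such that $|n\alpha_i - q_i| \le N^{-1/r}$ for every $i$. Choose $N$ large enough that $N^{-1/r}$ is smaller than $\varepsilon$ divided by whatever comparison constant converts a coordinatewise bound into a bound for the norm $\|\cdot\|$ (equal to $1$ for the sup norm, $\sqrt{r}$ for the Euclidean norm, etc.). Let $(n,q_1,\dots,q_r)$ be the data supplied by Dirichlet and set $\alpha' := (q_1/n,\dots,q_r/n) \in \Q^r$. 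Then by construction $n\alpha' = (q_1,\dots,q_r) \in \Z^r$ and $\|\alpha - \alpha'\| < \varepsilon/n$.

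The only remaining issue is to verify $q_i \ge 0$ for every $i$, which would give $\alpha' \in \Q^r_{\ge 0}$ and $n\alpha' \in \Z^r_{\ge 0}$. This follows immediately from $\alpha_i \ge 0$ and $\varepsilon < 1$: indeed $q_i > n\alpha_i - \varepsilon \ge -\varepsilon > -1$, so the integrality of $q_i$ forces $q_i \ge 0$ (and in fact $q_i = 0$ whenever $\alpha_i = 0$). I do not expect any genuine obstacle; the lemma is purely a Diophantine approximation statement, and Dirichlet's box principle supplies exactly the data required. The most delicate point is merely keeping track of the norm-dependent constant in the reduction of $\varepsilon$, which is a routine bookkeeping issue.
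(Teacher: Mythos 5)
Your argument is correct and structurally the same as the paper's: both proofs find a single denominator $n$ for which $n\alpha$ lies within $\varepsilon$ of a lattice point $q\in\Z^r_{\ge 0}$ and set $\alpha':=q/n$. The only real difference is the classical input and the rounding convention. The paper invokes Weyl's criterion \cite{KN74} to make the fractional parts $n\alpha_i-\lfloor n\alpha_i\rfloor$ simultaneously small and rounds \emph{down}, so non-negativity of the numerators is automatic; you invoke Dirichlet's box principle, which controls only the distance to the \emph{nearest} integer, and must therefore check $q_i\ge 0$ separately --- which you do correctly ($q_i>n\alpha_i-1\ge -1$ forces $q_i\ge 0$). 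Your route is, if anything, the more economical one: plain pigeonhole suffices, and the one-sided (fractional-part) approximation that the paper extracts from Weyl's criterion is never needed. Two minor points. First, the first bullet of the lemma must be read as $n\alpha'\in\Z^r_{\ge 0}$ rather than $n\alpha\in\Z^r_{\ge 0}$ (the latter is impossible for irrational $\alpha$); you prove the corrected statement, which is also what the paper's proof establishes and what the application in the proof of Theorem \ref{5000} uses. Second, your reduction to $\varepsilon<1$ as written only yields the coordinatewise bound $|\alpha_i-\alpha_i'|<\varepsilon$ in the case $\varepsilon\ge 1$, which gives $\|\alpha-\alpha'\|<\varepsilon$ for the sup norm but not, say, for the Euclidean norm. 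The case split is unnecessary anyway: your main argument works for every $\varepsilon>0$ once $N$ is chosen with $N^{-1/r}<\min(\varepsilon/C,1)$, where $C$ is the norm-comparison constant you already introduce, so this is a harmless slip rather than a gap.
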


\begin{proof}
By the Weyl's criterion (cf. \cite[Theorem 1.6.1]{KN74}), there exists a positive integer $n$ such that 
$$n\alpha_i-\floor{n\alpha_i}<\frac{\varepsilon}{\sqrt{r}}\text{ for each }i.$$
Define $\alpha'_i:=\floor{n\alpha_i}$ and set
$$\alpha:=\left(\frac{\alpha'_1}{n},\cdots,\frac{\alpha'_r}{n}\right).$$
Then $\|n\alpha-n\alpha_i\|<\varepsilon$. Dividing by $n$, we get $\|\alpha-\alpha'\|<\frac{\varepsilon}{n}$. This proves the claim.
\end{proof}

Finally, we can prove our main theorem, Theorem \ref{5000}

\begin{proof}[Proof of Theorem \ref{5000}]
The converse direction can be proved by Theorem \ref{HaraWatanabe}, and we need to prove only the direct direction. Note that the proof follows the same structure as that of \cite[Theorem 1.3]{KK23} and \cite[Proposition 4.2]{CJKL25}.

\smallskip

To prove that, let us use Lemma \ref{BLX}. By Lemma \ref{strong1}, there exists a small $\Q$-factorialization $f:X'\to X$ of $(X,\Delta)$, and let $\mathfrak{a}^{X'}_{\bullet}$ be a graded sequence of ideals in $\mathcal{O}_{X'}$ constructed in Lemma \ref{BLX}.

\smallskip

By Theorem \ref{quasar}, there exists a quasi-monomial valuation $\nu_0\in \mathrm{Val}_X$ that computes $\mathrm{lct}(X',f^{-1}_*\Delta,\mathfrak{a}^{X'}_{\bullet})$. Let $f:(X'',E)\to X'$ be a log smooth model of $X'$ such that $\nu_0\in \mathrm{QM}(X'',E)$. Let $f_R:X'_R\to X_R$ and $(X''_R,E_R)\to X'_R$ be models of $f:X'\to X$ and $(X'',E)\to X'$ over a finitely generated $\Z$-algebra $R$ respectively.
\smallskip

We obtain
\begin{equation} \label{a} 
\nu_0(\mathfrak{a}^{X'}_{\bullet})=\nu_0(K_X+\Delta)-\nu_0(K_{X'}+f^{-1}_*\Delta).
\end{equation}
Let $E_1,\cdots,E_r$ be prime components of $E$ these define $\nu_0$, $\eta:=c_{X'}(\nu_0)$ and $\alpha\in \R^r_{>0}$ a tuple with $\nu_0=\nu_{(X'',E),\eta,\alpha}$.

\smallskip

The definition of asymptotic flatness gives us that after shrinking $R$, for any effective $\Q$-Weil divisor $\Delta'$ on $X_{\overline{s}}$ with $K_{X_{\overline{s}}}+\Delta_{\overline{s}}+\Delta'$ $\Q$-Cartier,
\begin{equation} \label{b} \nu_{(X''_{\overline{s}},E_{\overline{s}}),\eta_{\overline{s}},\alpha}(K_{X_{\overline{s}}}+\Delta_{\overline{s}}+\Delta')\ge \nu_{(X'',E),\eta,\alpha}(K_X+\Delta)\text{ for all closed points }s\in \Spec R.
\end{equation}
Moreover, by using Proposition \ref{disc 2}, we obtain
\begin{equation}\label{c}
A_{X'_{\overline{s}},f^{-1}_{\overline{s}*}\Delta_{\overline{s}}}(\nu_{(X''_{\overline{s}},E_{\overline{s}}),\eta_{\overline{s}},\alpha})=A_{X',f^{-1}_*\Delta}(\nu_{(X'',E),\eta,\alpha})\text{ for all closed points }s\in \Spec R.
\end{equation}
Set $\nu'_0:=\nu_{(X''_{\overline{s}},E_{\overline{s}}),\eta_{\overline{s}},\alpha}$, and fix a closed point $s\in \Spec R$.

\smallskip

To deduce $\mathrm{lct}(X',f^{-1}_*\Delta,\mathfrak{a}^{X'}_{\bullet})>1$ from (\ref{a}), (\ref{b}) and (\ref{c}), it suffices to show that for some $\Delta'$,
\begin{equation} \label{goal} A_{X'_{\overline{s}},f^{-1}_{\overline{s}*}\Delta_{\overline{s}}}(\nu'_0)-\nu'_0(K_{X_{\overline{s}}}+\Delta_{\overline{s}}+\Delta')+\nu'_0(K_{X'_{\overline{s}}}+f^{-1}_{\overline{s}*}\Delta_{\overline{s}})>0. 
\end{equation}

\smallskip

 Since $(X_{\overline{s}},\Delta_{\overline{s}})$ is strongly $F$-regular, Theorem \ref{SS10} ensures the existence of an effective $\Q$-Weil divisor $\Delta'$ on $X_{\overline{s}}$ such that the pair $(X_{\overline{s}},\Delta_{\overline{s}}+\Delta')$ is a strongly $F$-regular pair. Fix the $\Delta'$. Now, one can find a positive number $\varepsilon>0$ satisfying 
\begin{equation} \label{d}
    A_{X_{\overline{s}},\Delta_{\overline{s}}+\Delta'}(E')>\varepsilon
\end{equation}
for every prime divisor $E'$ over $X_{\overline{s}}$.

\smallskip

Let $\|\cdot \|$ be a metric on $\mathrm{QM}(X''_{\overline{s}},E_{\overline{s}})$ that induces the topology, and define
$$ \phi(\nu):=A_{X'_{\overline{s}},f^{-1}_{\overline{s}}\Delta_{\overline{s}}}(\nu)-\nu(K_{X_{\overline{s}}}+\Delta_{\overline{s}}+\Delta')+\nu(K_{X'_{\overline{s}}}+f^{-1}_{\overline{s}*}\Delta_{\overline{s}})$$
for each $\nu\in \mathrm{QM}_{\eta}(X''_{\overline{s}},E_{\overline{s}})$. Then
\begin{itemize}
    \item[(i)] if $f:X'''\to X'_{\overline{s}}$ is a proper birational morphism between normal varieties and $E'$ a prime divisor on $X''$, then by (\ref{d}), 
    $$\phi(\mathrm{ord}_{E'})=\mathrm{ord}_{E'}(K_{X'''})+1-\mathrm{ord}_{E'}(K_{X_{\overline{s}}}+\Delta_{\overline{s}}+\Delta')=A_{X_{\overline{s}},\Delta_{\overline{s}}+\Delta'}(E')>\varepsilon.$$
    \item[(ii)] $\phi$ is a convex function, and thus locally Lipschitz (cf. \cite[Theorem
2.1.22]{Hor07}).
\end{itemize}

By Lemma \ref{Diophantus of Alexandria}, for each $t>0$, there exist divisorial valuations $\nu_t\in \mathrm{QM}_{\eta}(X''_{\overline{s}},E_{\overline{s}})$ and a positive integer $n$ such that
\begin{itemize}
    \item[(iii)] $n\cdot \nu_t=\mathrm{ord}_{F_t}$ for some prime divisor $F_t$ over $X_{\overline{s}}$.
    \item[(iv)] $\|\nu'_0-\nu_t\|<\frac{t}{n}.$
\end{itemize}

By (ii), there exists $C>0$ such that for any $0<t\ll 1$, 
\begin{equation}\label{C}
|\phi(\nu'_0)-\phi(\nu_t)|<C\|\nu'_0-\nu_t\|.
\end{equation}
We see that for any $0<t\ll 1$,
$$ 
\begin{aligned}
\phi(n\cdot \nu'_0)&\ge \phi(n\cdot \nu_t)-|\phi(n\cdot \nu'_0)-\phi(n\cdot \nu_t)|
\\ &\ge \phi(\mathrm{ord}_{F_t})-C\cdot n\cdot \|\nu'_0-\nu_t\|& (1)
\\ &>\varepsilon-C\cdot t& (2)
\\ &>0,
\end{aligned}$$
where we used (iii) and (\ref{C}) in (1), and (i) and (iv) in (2). Hence, $\phi(\nu'_0)>0$ and (\ref{goal}) hold. We complete the proof.
\end{proof}

\end{document}